\documentclass[reqno]{amsart}
\usepackage{amsmath, mathrsfs, amssymb}
\allowdisplaybreaks

\newtheorem{theorem}{Theorem}[section]
\newtheorem{lemma}[theorem]{Lemma}
\newtheorem{proposition}[theorem]{Proposition}
\theoremstyle{definition}
\newtheorem{definition}[theorem]{Definition}
\newtheorem{example}[theorem]{Example}
\newtheorem{corollary}[theorem]{Corollary}
\theoremstyle{remark}
\newtheorem{remark}[theorem]{Remark}

\begin{document}

\title{ Functions and Means of Accretive Operators} 

\author{Mitsuru Uchiyama}
\address{Shimane Univ. Matsue, and Ritsumeikan Univ. Otsu, Japan} 
\email{uchiyama@riko.shimane-u.ac.jp}
\address{Shimane Univ. Matsue, and Ritsumeikan Univ. Otsu, Japan} 
\subjclass[2020] {Primary 47A60; 47A64; 15A18; Secondary 15A15}
\date{6/29, 2026}


\commby{Javad Mashreghi}

\begin{abstract}
Let $A$ be a bounded accretive operator on a Hilbert space and 
$f(t)$ an operator monotone function on $(0, \infty)$ with $f(0)>-\infty$. 
Then, for $\epsilon >0$, analytic function $f (A+\epsilon I) $ is defined by Riesz-Dunford integral. 
We define $f(A)$ as the norm limit of it and show    
$$ f(A) = f(0)I + b A + \int_0^{\infty} (\frac{1}{\lambda} I - (\lambda I + A)^{-1})
d\mu(\lambda).$$
This is a generalization of fractional powers 
$$A^r = \frac{\sin r \pi}{\pi} \int_0^{\infty} 
 (\frac{1}{\lambda} I - (\lambda I + A)^{-1}) \lambda ^{r} d\lambda \quad (0<r<1).$$
Let $A$ and $B$ be strictly accretive matrices, namely those real parts are positive definite.
 The geometric mean 
$A\# B$ has been introduced in \cite{Drury} and subsequently general matrix mean 
$A\sigma_f B$ in \cite{BKS1}. 
We extend these means to accretive, not necessarily strictly accretive, operators $A$ 
and $B$, and verify that   
$$A\# B= A^{1/2} B^{1/2}$$
 if $A$ and $B$ are normal and commutative. 
Let $A$ be a strictly accretive operator. Then we show that 
$$0 \leqq \frac{1}{2} (A + A^*) \leqq  A \# A^* \leqq 2(A^{-1} + (A^*)^{-1})^{-1},$$
and that $A \# A^* = | A |$ if and only if $A$ is normal. 
For a normal and strictly accretive operator $A$ we get
\begin{align*}
&|A|=  \frac{1}{\pi}\int_0^{\infty}A (\lambda A + A^*)^{-1} A^* 
\lambda^{-1/2} d \lambda, \\
&A + A^* \leqq   A^{1-r} A^{*r} + A^r A^{*(1-r)} \quad (0\leqq r \leqq 1).
\end{align*}
The former deduces the simplest case  
\begin{center}
$1 = \frac{1}{\pi} \int_0^{\infty} \frac{1}{(\lambda e^{i\theta}  +  e^{-i\theta})
\sqrt{\lambda} } d \lambda \quad (|\theta | < \pi/2), $
\end{center}
which is a basic formula shown by using residue.

\end{abstract}

\maketitle
\section{Introduction}
In this note, an {\it operator} means a bounded linear operator on a Hilbert space with the inner product $(\!\mathbf{x}, \mathbf{y}\!)$. The numerical range 
$\mathcal{W}(T)\!:=\{(\!T\mathbf{x}, \mathbf{x}\!): ||\mathbf{x}|| = 1\}$ 
is convex, and it's closure contains the spectrum $\it{Sp}(T)$.
 $T$ is said to be positive and denoted by $T\geq 0$ 
if $(\!T\mathbf{x}, \mathbf{x}\!)\geq 0$ for every $\mathbf{x}$. 
$A$ is called an {\it accretive} operator if 
 $\Re{A} \geqq 0$, where 
 $A = \Re(A) + i \Im(A)$ is the Cartesian decomposition of $A$. 
 $A$ is said to be {\it strictly accretive} when  
$\Re(A)$ is not only positive but also positive definite (\cite{Kato-book}). 
\par
A real valued function $f(t)$ is called 
an {\it operator monotone} function on the interval $(0, \infty)$ if it satisfies $f(A) \leqq f(B)$ whenever 
$0< \it{Sp}(A)$ and $0\leqq A \leqq B$.
 By the Loewner theorem it has the analytic extension $f(z)$ to $\mathbb{C} \smallsetminus (-\infty, 0]$, 
which is a Pick function, that is,  
it maps the upper open half plane into itself. By Herglotz and Nevanlinna's theorem it is represented as 
\begin{equation*}
f(t)= a + bt + \int_0^{\infty} (\frac{\lambda}{\lambda^2 +1} - \frac{1}{\lambda +t}) d\mu(\lambda), \quad 
\int_0^{\infty}\frac{1}{\lambda^2 + 1} d\mu (\lambda) <\infty, 
\end{equation*}
where $a$ is a real number and $b\geqq 0$. \par
Suppose $f(0):= f(+0) >-\infty$. Then we have 
$$\lim_{t\to +0}\int_0^1 (\frac{\lambda}{\lambda^2 +1} - \frac{1}{\lambda +t}) d\mu(\lambda) >-\infty;$$ 
because, for $0<t<1$, 
$$|\int_1^{\infty} (\frac{\lambda}{\lambda^2 +1} - \frac{1}{\lambda +t}) d\mu(\lambda)|\leqq 
\int_1^{\infty} \frac{\lambda t +1}{(\lambda^2 +1)(\lambda +t)} d\mu(\lambda) \leqq
\int_1^{\infty} \frac{1}{\lambda^2 +1} d\mu(\lambda) < \infty.$$
By Fatou's theorem 
$$\int_0^1\lim_{t\to +0} ( \frac{1}{\lambda +t} - \frac{\lambda}{\lambda^2 +1} )d\mu(\lambda) \leqq 
\lim_{t\to +0}\int_0^1 ( \frac{1}{\lambda +t} - \frac{\lambda}{\lambda^2 +1}) d\mu(\lambda)<\infty.$$
Since $\int_0^1 \frac{\lambda}{\lambda^2 +1} d\mu(\lambda)<\infty$, this implies  
\begin{center}
$\int_0^1 \frac{1}{\lambda} d\mu(\lambda)<\infty$ and hence $\mu(\{0\})=0$.
\end{center}
 We hence get 
\begin{equation}
\begin{split}
&f(t)= f(0) + bt + \int_0^{\infty} (\frac{1}{\lambda} - \frac{1}{\lambda +t}) d\mu(\lambda), \\ 
&b\geqq 0, \quad \int_0^{\infty}\frac{1}{\lambda^2 + 1} d\mu (\lambda) <\infty, \quad \int_0^1 
\frac{1}{\lambda} d\mu(\lambda)<\infty. \label{eq:1}
\end{split}
\end{equation}
For example,  for $0<r<1$ and $0<t<\infty$
$$t^r = \frac{\sin r \pi}{\pi} \int_0^{\infty} \frac{t}{\lambda + t } \lambda ^{r-1} d\lambda =
 \frac{\sin r \pi}{\pi} \int_0^{\infty} 
 (\frac{1}{\lambda} - \frac{1}{\lambda +t}) \lambda ^{r} d\lambda.$$
(refer to \cite{D}, \cite{B-M}, \cite{Simon} for more details)\par
Let $T$ be an operator with ${\it Sp}(T) \cap (-\infty, 0] = \emptyset$ and 
 $\gamma$ a smooth curve 
in $\mathbb{C} \smallsetminus (-\infty, 0]$ and surrounding 
$\it{Sp}(T)$. Then 
the Riesz-Dunford integral
$$f(T) = \frac{1}{2\pi i}\int_{\gamma}  f(z) (z I - T)^{-1} dz$$
 is well-defined for an analytic extension $f(z)$ of $f(t)$ given \eqref{eq:1}
(see Chapter VII of \cite{D-S}).
We will show that
$$ f(T)=  f(0) I  + b T + \int_0^{\infty} \left(\frac{1}{\lambda } I - (\lambda I  + T)^{-1}\right)d\mu(\lambda), $$
and that this holds for an accretive operator $A$ even if $0\in {\it Sp}(A)$.
This naturally deduces 
$$A^r = \frac{\sin r \pi}{\pi} \int_0^{\infty} 
 (\frac{1}{\lambda} I - (\lambda I + A)^{-1}) \lambda ^{r} d\lambda \quad (0<r<1),$$
which is known as the Balakrishnan formula\cite{Bal} for a closed and accretive operator
 (cf. \cite{N-F}, \cite{Kato-book}).
  So, the formula $f(T)$ mentioned above is a generalization of fractional powers. \par
 Drury \cite{Drury}
defined {\it geometric mean} $A\# B$ of strictly accretive {\it matrices} $A$ and $B$ by 
\begin{equation}\label{eq:1-0}
(A\# B)^{-1}:= \frac{2}{\pi} \int_0^{\infty} (tA + t^{-1} B)^{-1} t^{-1} d t.
\end{equation}
and showed $A \# B = A^{1/2} (A^{-1/2} B A^{-1/2})^{1/2} A^{1/2}$. We extend this as follows: 
Let $A$ and $B$ be accretive operators such that  
${\it Sp}(BA^{-1}) \cap (-\infty, 0] = \emptyset$.  
Then we define the geometric mean and the general operator mean $\sigma_f$ by 
$$A\#B:=A^{1/2} (A^{-1/2} B A^{-1/2})^{1/2} A^{1/2}, \quad A\sigma_f B:= A^{1/2} 
f( A^{-1/2} B A^{-1/2}) A^{1/2},$$
and show 
\begin{equation}\label{eq:1-1}
A\#B= \frac{1}{\pi}\int_0^{\infty} \frac{1}{\lambda} 
A ( \lambda A + B)^{-1} B \sqrt{\lambda} d\lambda.\\
\end{equation}
A new formula  
$$A (A\# B)^{-1} B= A \# B$$
leads us to see that  \eqref{eq:1-1}  is coincident with \eqref{eq:1-0}. 
One of the objective of this paper is to show 
$$A\#B = A^{1/2} B^{1/2}$$
if $A$ and  $B$ are normal and commutative. Another one is to verify  
$$0\leqq A^*\nabla A \leqq A^* \# A \leqq A^* \,!\, A,$$
where symbols $\nabla, !$ conventionally stand for arithmetic and harmonic mean, 
respectively. At first sight this seems strange, because 
\begin{center}
$A \nabla B \geqq A \# B \geqq A \,!\, B\geqq 0$ for $A, \,B \geqq 0$.
\end{center}
However we should remember that $0\leqq {\it Re}(\alpha) \leqq |\alpha| \leqq 
|\alpha |^2 /{\it Re}(\alpha)$ for a complex number $\alpha$ with 
${\it Re}(\alpha) >0$.
We further show that $A^* \# A = | A |$ if and only if $A$ is normal. Consequently, for 
a stricly accretive and normal operator $A$  
$$|A|=  \frac{1}{\pi}\int_0^{\infty}A (\lambda A + A^*)^{-1} A^* 
\lambda^{-1/2} d \lambda , $$
which deduces the simplest case:
$$1 = \frac{1}{\pi} \int_0^{\infty} \frac{1}{(\lambda e^{i\theta}  +  e^{-i\theta})
\sqrt{\lambda} } d \lambda \quad (|\theta | < \pi/2). $$
This is a basic formula shown by using residue.
For an accretive and normal operator $A$ we also gain 
$$A + A^* \leqq   A^{1-r} A^{*r} + A^r A^{*(1-r)} \quad (0\leqq r \leqq 1).$$

\section{Preliminaries}
Let $f(t)$ be an operator monotone function on $(0, \infty)$ represented by 
$\eqref{eq:1}$. Then by Loewner's theorem 
\begin{equation}
f(z) = f(0) + b z + \int_0^{\infty} (\frac{1}{\lambda} - \frac{1}{\lambda + z}) d\mu(\lambda) \label{eq:2}
\end{equation}
 is the analytic extension of $f(t)$ to $\mathbb{C} \smallsetminus (-\infty, 0]$. 
But it is possible to see, without using Loewner's theorem,  that $f(t)$ is of the 
form $\eqref{eq:1}$ if and only if 
$f(t)$ is operator monotone on $(0, \infty)$ with $f(0)>-\infty$; for instance see \cite{Hansen}. 
Just in case, we give a proof of it.
$\arg z$ always stands for the principal argument of $z\ne 0$, namely 
$-\pi < \arg z \leqq \pi$.
\begin{lemma}\label{Lem:2-1} Let $f(t)$ be an operator monotone function on $(0, \infty)$ given 
by \eqref{eq:1}. Then 
$f(z)$ given by \eqref{eq:2} is the analytic extension of $f(t)$ to 
$\mathbb{C}\setminus (-\infty, 0]$ such that $\overline{f(\overline{z})} = f(z)$.
 Further, 
for any $\theta: \frac{\pi}{2} \leqq \theta < \pi$, 
$f(z)$ is continuous 
on $$\{ z \in \mathbb{C}: z=0\,\, \text{or}\,\, |\arg z| \leqq \theta \}.$$
\end{lemma}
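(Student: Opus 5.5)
We need to show three things: the integral in \eqref{eq:2} converges and defines an analytic function on $\mathbb{C}\setminus(-\infty,0]$ that agrees with $f(t)$ for $t>0$; it satisfies $\overline{f(\overline z)}=f(z)$; and it is continuous on the closed sector $S_\theta=\{z: z=0 \text{ or } |\arg z|\leqq\theta\}$ for $\frac{\pi}{2}\leqq\theta<\pi$. The central idea is a uniform domination estimate on $S_\theta$, which gives all three via dominated convergence and Morera's theorem. For $z\in S_\theta$ and $\lambda>0$ we have $|\arg z|\leqq\theta$, so the angle between $\lambda$ and $z$ is at most $\theta$. The law of cosines then gives
$$|\lambda+z|^2=\lambda^2+|z|^2+2\lambda|z|\cos(\arg z)\geqq \lambda^2+|z|^2-2\lambda|z||\cos\theta| \geqq c_\theta(\lambda+|z|)^2,$$
where $c_\theta=(1-|\cos\theta|)/2>0$; this uses $\theta<\pi$ together with $\lambda^2+|z|^2\geqq(\lambda+|z|)^2/2$. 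Hence $|\lambda+z|\geqq \sqrt{c_\theta}\,(\lambda+|z|)$. This is the key step: the constant degenerates as $\theta\to\pi$, which is exactly why the sector must stay away from the negative axis.

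Using this estimate, the integrand satisfies
$$\Bigl|\frac1\lambda-\frac1{\lambda+z}\Bigr|=\frac{|z|}{\lambda|\lambda+z|}\leqq \frac{|z|}{\sqrt{c_\theta}\,\lambda(\lambda+|z|)}.$$
For $|z|\leqq R$ we bound this by $\frac{1}{\sqrt{c_\theta}\,\lambda}$ when $\lambda\leqq1$, which is integrable by $\int_0^1\lambda^{-1}d\mu<\infty$. When $\lambda>1$ we bound it by $\frac{R}{\sqrt{c_\theta}\,\lambda^2}\leqq\frac{2R}{\sqrt{c_\theta}(\lambda^2+1)}$, which is integrable by $\int\frac{d\mu}{\lambda^2+1}<\infty$. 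So on $S_\theta\cap\{|z|\leqq R\}$ the integrand is dominated by a $\mu$-integrable function $g_{R,\theta}(\lambda)$ that does not depend on $z$. At $z=0$ the integrand is $0$, and for each fixed $\lambda>0$ the map $z\mapsto \frac1\lambda-\frac1{\lambda+z}$ is continuous on $S_\theta$, including at $z=0$. Dominated convergence therefore gives continuity of the integral on $S_\theta\cap\{|z|\leqq R\}$ for every $R$, hence on $S_\theta$, with value $f(0)$ at $0$. Adding the continuous terms $f(0)+bz$ finishes the continuity claim.

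For analyticity, note that every compact $K\subset\mathbb{C}\setminus(-\infty,0]$ lies in some $S_\theta\cap\{|z|\leqq R\}$ with $\theta<\pi$. The integrand is analytic in $z$ for each $\lambda$ and dominated uniformly on $K$, so Fubini gives $\oint_\Delta=0$ over every triangle in $K$, and Morera's theorem yields analyticity. Since every $\lambda$ and $d\mu$ is real, conjugating the integrand gives $\overline{f(\overline z)}=f(z)$ directly. On $(0,\infty)$ formula \eqref{eq:2} coincides with \eqref{eq:1}, so it is an extension of $f(t)$, and because $\mathbb{C}\setminus(-\infty,0]$ is connected this extension is unique by the identity theorem.
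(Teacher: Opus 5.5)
Your proof is correct and follows essentially the same route as the paper: a $z$-independent, $\mu$-integrable majorant on the sector, combined with dominated convergence. The differences are cosmetic. You obtain analyticity via Fubini and Morera instead of the paper's difference-quotient argument (which also yields $f'(z)=b+\int_0^{\infty}(\lambda+z)^{-2}\,d\mu(\lambda)$), and your single law-of-cosines bound $|\lambda+z|\geqq\sqrt{c_\theta}\,(\lambda+|z|)$ replaces the paper's case split $\Re z\geqq 0$ versus $\Re z<0$, where it uses $|z/(\lambda+z)|\leqq 1/\sin\theta$.
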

\begin{proof} The integrand in \eqref{eq:2} is continuous on  $\{(\lambda , z ): 0<\lambda <\infty, 
z\in \mathbb{C} \smallsetminus (-\infty, 0] \}$, 
 and integrable with respect to $\mu(\lambda)$; indeed, 
$$\lim_{\lambda \to 0} \lambda |\frac{1}{\lambda} - \frac{1}{\lambda -z}| =1, \quad 
\lim_{\lambda \to \infty} ( 1+ \lambda ^2) |\frac{1}{\lambda} 
- \frac{1}{\lambda -z}| =|z|.$$
To see the analyticity, take $z_0 \in \mathbb{C} \smallsetminus (-\infty, 0] $ and put
$0<\delta < {\it dist}\{ z_0, (-\infty, 0]\}$. For $z$ s.t. $|z - z_0|<\delta$, 
$$\frac{f(z) - f(z_0)}{z- z_0} = b + \int_0^{\infty} (\frac{1}{(\lambda + z_0) (\lambda + z)} d\mu(\lambda).$$
Since 
\begin{center} 
$|\frac{1}{(\lambda + z_0) (\lambda + z)}| \leqq \frac{1}{|\lambda + z_0| (|\lambda + z_0| - \delta )} $
\end{center} 
and there is $M>0$ such that 
$\frac{1}{|\lambda + z_0| (|\lambda + z_0| - \delta )} \leqq M \frac{1}{1 + \lambda ^2}$ for $\lambda >0$,
we therefore get 
$$ f'(z)= b + \int_0^{\infty} (\frac{1}{ (\lambda + z)^2} d\mu(\lambda).$$
 We next show the continuity of $f(z)$ at $z=0$.
Let $z\ne 0$, $|\arg z| \leqq \theta$ and $|z|\leqq 1$. Then
\begin{center}
$|\frac{z}{\lambda + z}|\leqq 1\, \,\, (\Re{z} \geqq 0), \quad 
|\frac{z}{\lambda + z}|\leqq |\frac{z}{\Im z}|\leqq \frac{1}{\sin \theta}\,\,
 \, (\Re{z} <0)$.
\end{center}
Consequently  
$$|\frac{1}{\lambda} - \frac{1}{\lambda + z} | \leqq \frac{1}{\lambda \sin \theta} \quad (\lambda > 0).$$
On the other hand, we have
$$|\frac{1}{\lambda} - \frac{1}{\lambda + z} | \leqq \frac{2}{\lambda ^2} \quad 
(\lambda \geqq 2).$$ Combining these inequalities lead us to   
$$|\frac{1}{\lambda} - \frac{1}{\lambda + z} | \leqq g(\lambda)
: =
\begin{cases}
\frac{1}{\lambda} \frac{1}{ |\sin \theta|}  &  (0 <\lambda \leqq 2)\\
\frac{2}{\lambda^2} \frac{1}{ |\sin \theta|}  & ( 2\leqq \lambda <\infty).
\end{cases}
$$
Since $g(\lambda)$ is continuous and integrable,
$$\lim_{z\to 0} f(z) = f(0) + \int_0^{\infty} \lim_{z\to 0} (\frac{1}{\lambda} - 
\frac{1}{\lambda + z})
 d\mu(\lambda) = f(0).$$
\end{proof}

Let $T$ be an operator such that ${\it Sp}(T) \cap (-\infty, 0] = \emptyset$ and 
$f(z)$ an analytic function expressed by \eqref{eq:2}. Then  
$f(T)$ is defined by the Riesz-Dunford integral. 
Recall that ${\it Sp}(f(T)) = f ({\it Sp}(T))$ and $f(g(T))= (f\circ g)(T)$, so 
${\it Sp}(T^{r}) \subseteqq \{z: -r\pi <\arg z <r \pi\} $,  and  $(T^{r})^s = T^{rs}$,    
where $0<r, s <1$, and  
$ (T^{1/n})^n = T$, furthermore, we have $(T^{-1})^r = (T^r)^{-1}$ for $0<r<1$.  
We here represent $f(T)$ by using \eqref{eq:2}, that is \eqref{eq:1}.  

\begin{proposition}\label{Prop:2-2}
Let $T$ be an operator with ${\it Sp}(T) \cap (-\infty, 0] = \emptyset$, and $f(t)$ an
 operator monotone function on $(0, \infty)$ represented by \eqref{eq:1}, namely 
\eqref{eq:2}. 
Define $f(T)$ by the Riesz-Dunford integral.   
Then, for all $\mathbf{x}, \mathbf{y}$
$$
 (f(T) \mathbf{x}, \mathbf{y})=  f(0) (\mathbf{x}, \mathbf{y})  + b (T \mathbf{x}, \mathbf{y})
 + \int_0^{\infty} \left(\frac{1}{\lambda}(\mathbf{x}, \mathbf{y}) - 
((\lambda I  + T)^{-1}\mathbf{x}, \mathbf{y})\right)  d\mu(\lambda).$$
 We write this as
\begin{equation}
 f(T)=  f(0) I  + b T + \int_0^{\infty} \left(\frac{1}{\lambda } I - (\lambda I  + T)^{-1}\right) d\mu(\lambda).\label{eq:3}
\end{equation}
In this case we have
$$|| f(T)|| \leqq   |f(0)|  + b ||T|| + \int_0^{\infty} ||\frac{1}{\lambda } I - (\lambda I  + T)^{-1}|| d\mu(\lambda)
<\infty.$$
Moreover,  $f(T)^* = f(T^*)$,  and  $f(T)$ is normal if $T$ is.

\end{proposition}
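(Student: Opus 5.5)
Since ${\it Sp}(T)$ is compact and disjoint from $(-\infty,0]$, I fix a smooth closed curve $\gamma$ in $\mathbb{C}\smallsetminus(-\infty,0]$ surrounding ${\it Sp}(T)$, at positive distance $\delta$ from $(-\infty,0]$, with ${\it Sp}(T)$ inside it. The plan is to substitute the integral representation \eqref{eq:2} of $f(z)$ into the Riesz--Dunford integral, interchange the $dz$ and $d\mu(\lambda)$ integrations (Fubini, applied to the scalar functions $z\mapsto ((zI-T)^{-1}\mathbf{x},\mathbf{y})$), and evaluate the inner contour integrals by the elementary functional calculus.

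For each fixed $\lambda>0$ the function $z\mapsto \frac1\lambda-\frac1{\lambda+z}$ is analytic on a neighbourhood of the closed region bounded by $\gamma$, because its only pole $-\lambda$ lies on $(-\infty,0)$, outside $\gamma$. The Riesz--Dunford calculus (a homomorphism on rational functions) therefore gives
$$\frac{1}{2\pi i}\int_\gamma \Big(\frac1\lambda-\frac1{\lambda+z}\Big)(zI-T)^{-1}dz=\frac1\lambda I-(\lambda I+T)^{-1}.$$
Likewise the constant term $f(0)$ produces $f(0)I$ and the term $bz$ produces $bT$. Thus (3) follows as soon as the interchange of integrals is justified.

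The main obstacle is the Fubini justification, which requires
$$\int_\gamma\int_0^\infty \Big|\frac1\lambda-\frac1{\lambda+z}\Big|\,\|(zI-T)^{-1}\|\,d\mu(\lambda)\,|dz|<\infty .$$
The factor $\|(zI-T)^{-1}\|$ is bounded on the compact curve $\gamma$. For the other factor, write $\big|\frac1\lambda-\frac1{\lambda+z}\big|=\frac{|z|}{\lambda|\lambda+z|}$ and use $|\lambda+z|\ge\delta$ for $z\in\gamma$, together with $|\lambda+z|\ge \lambda-R$ where $R=\max_\gamma|z|$.
\begin{itemize}
\item For $0<\lambda\le 2R$ this gives the bound $C/\lambda$.
\item For $\lambda\ge 2R$ it gives the bound $C/\lambda^2$.
\end{itemize}
Both bounds are $\mu$-integrable by the integrability conditions in \eqref{eq:1}, since $\int_0^1\lambda^{-1}d\mu<\infty$ and $\int_0^\infty(1+\lambda^2)^{-1}d\mu<\infty$. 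The same estimates, applied at $z\in-{\it Sp}(T)$ with $\delta$ replaced by the distance from ${\it Sp}(T)$ to $(-\infty,0]$, show that $\lambda\mapsto\|\frac1\lambda I-(\lambda I+T)^{-1}\|=\|T(\lambda I+T)^{-1}\|/\lambda$ is $\mu$-integrable. Here $\|(\lambda I+T)^{-1}\|$ is bounded uniformly for $\lambda$ near $0$ and is $O(1/\lambda)$ for large $\lambda$ by the Neumann series. This yields the asserted norm bound and shows that the weak integral defines a bounded operator.

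Finally, for the adjoint, take complex conjugates of the weak formula and use $((\lambda I+T)^{-1})^*=(\lambda I+T^*)^{-1}$, the fact that $\mu$ is a positive measure, and that $f(0)$ and $b$ are real. Since ${\it Sp}(T^*)=\overline{{\it Sp}(T)}$ also avoids $(-\infty,0]$, the formula (3) applies to $T^*$, and so $f(T)^*=f(T^*)$. If $T$ is normal, then $T^*$ commutes with every $(\lambda I+T)^{-1}$, hence with $f(T)$. Taking adjoints, $T$ commutes with every $(\lambda I+T^*)^{-1}$, hence with $f(T^*)=f(T)^*$. Consequently every term in the representation of $f(T)$ commutes with every term in that of $f(T)^*$, and $f(T)$ is normal.
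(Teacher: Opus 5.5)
Your proposal is correct and follows the paper's route: substitute \eqref{eq:2} into the Riesz--Dunford integral, interchange the $dz$ and $d\mu$ integrations by Fubini, evaluate each inner contour integral as $\frac{1}{\lambda}I-(\lambda I+T)^{-1}$, and then read the norm bound, $f(T)^*=f(T^*)$ and normality off \eqref{eq:3}. Your explicit dominating bounds, $C/\lambda$ and $C/\lambda^2$ uniformly over $\gamma$, justify the interchange more carefully than the paper's appeal to continuity of the integrand. One small correction: the $\mu$-integrability of $\|\frac{1}{\lambda}I-(\lambda I+T)^{-1}\|$ comes from continuity of the resolvent on $[0,\infty)$ together with the Neumann series, as your second sentence there says. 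It does not come from scalar estimates at points of ${\it Sp}(T)$, which do not control resolvent norms when $T$ is not normal.
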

 
\begin{proof}
Let $\gamma$ be a smooth, simple and closed curve in $\mathbb{C} \smallsetminus (-\infty, 0]$ 
 surrounding $\it{Sp}(T)$. Then 
$$f(T) = \frac{1}{2\pi i} \int_{\gamma}  f(z) (z I - T)^{-1} dz.$$
Since $\frac{1}{2\pi i} \int_{\gamma}  (f(0) + b z) (z I - T)^{-1} dz = f(0) I  + b T$, we need to show 
\begin{gather*}
\frac{1}{2\pi i} \int_{\gamma} \left(\int_0^{\infty} (\frac{1}{\lambda } - \frac{1}{\lambda + z}) d\mu(\lambda) 
((z I - T)^{-1} \mathbf{x}, \mathbf{y})\right) dz\\
= \int_0^{\infty} ((\frac{1}{\lambda } I - (\lambda I  + T)^{-1}) \mathbf{x}, \mathbf{y})  d\mu(\lambda).
\end{gather*}
By the preceding lemma, the left side exists. To use the Fubini theorem
 represent $\gamma$ as 
$ z=\phi (s), \alpha \leqq s \leqq \beta$. Then the left side is equal to 
$$\frac{1}{2\pi i} \int_{\alpha}^{\beta} \left(\int_0^{\infty} (\frac{1}{\lambda} - \frac{1}{\lambda + \phi (s)})
d\mu(\lambda) 
((\phi (s) I - T)^{-1} \mathbf{x}, \mathbf{y}) \right) \phi'(s) d s,$$
which is finite. 
The integrand of the double integral 
$$(\frac{1}{\lambda} - \frac{1}{\lambda + \phi (s)}) ((\phi (s) I - T)^{-1} \mathbf{x}, \mathbf{y}) \phi'(s)$$
is continuous with respect to  $\lambda$ and $s$. Hence the above successive integral is equal to 
\begin{align*}
&\int_0^{\infty}d\mu(\lambda) 
\frac{1}{2\pi i} \int_{\alpha}^{\beta} (\frac{1}{\lambda} - \frac{1}{\lambda + \phi (s)}) ((\phi (s) I - T)^{-1}
 \mathbf{x}, \mathbf{y})  \phi'(s) d s\\
= & \int_0^{\infty}d\mu(\lambda) \frac{1}{2\pi i} \int_{\gamma} (\frac{1}{\lambda} 
- \frac{1}{\lambda + z})
 ((z I - T)^{-1} \mathbf{x}, \mathbf{y})  d z\\
=&  \int_0^{\infty}((\frac{1}{\lambda}I -
(\lambda I + T)^{-1}) \mathbf{x}, \mathbf{y}) d \mu(\lambda). 
\end{align*}
We thus got the required formula.  
Since 
$$\lim_{\lambda \to + 0} \lambda ||\frac{1}{\lambda} I - (\lambda I  + T)^{-1}|| = 1\, \, \text{and} \, \,
\lim_{\lambda \to + \infty} \lambda^2 ||\frac{1}{\lambda} I - (\lambda I  + T)^{-1}|| = || T||,$$ 
 $$\int_0^{\infty} ||\frac{1}{\lambda} I - (\lambda I  + T)^{-1}|| d\mu(\lambda)
<\infty.$$
From \eqref{eq:3} it follows that $f(T)^* = f(T^*)$, and this lead us to the last statement.
\end{proof}

\begin{remark}\label{Re:2-1}\rm
One may think the integral in $\eqref{eq:3}$ as the Bochner integral of the operator valued 
function. Indeed, the integrand is continuous, and 
$$\int_0^{\infty} ||\frac{1}{\lambda} I - (\lambda I  + T)^{-1}|| d\mu(\lambda)
<\infty.$$
Consequently the integral is well-defined as the Bochner integral and 
\begin{align*}
&\left(\int_0^{\infty} (\frac{1}{\lambda} I - (\lambda I  + T)^{-1}) d\mu(\lambda)
\mathbf{x}, \mathbf{y}\right)\\
 =& \int_0^{\infty} \left(\frac{1}{\lambda}(\mathbf{x}, \mathbf{y}) - 
((\lambda I  + T)^{-1}\mathbf{x}, \mathbf{y})\right)  d\mu(\lambda)
\end{align*}
(cf. Section V. 5 of \cite{Y}). 
But it is enough for us to consider $\eqref{eq:3}$ in the weak sense (see 3.26 Definition of \cite{Rudin}). 
This kind of representation was dealt in \cite{Sano}
\end{remark}
\begin{remark}\label{Re:2-2}\rm
(i). Let $T$ be an $n\times n$ normal matrix such that ${\it Sp}(T) \cap (-\infty, 0] = \emptyset$ and $\alpha_1 P_1 + \cdots + \alpha_k P_k  
\, (1\leqq k \leqq n)$ its spectral decomposition. Then  
$$f(T)= \int_{\gamma} f(z) (z-T)^{-1} dz =  f(\alpha_1) P_1 + \cdots + 
f(\alpha_k) P_k.$$
(ii). Let $T$ be a normal operator such that ${\it Sp}(T) \cap (-\infty, 0] = \emptyset$ and 
$\int_{{\it Sp}(T)} z d E_z$ its spectral decomposition. Then the Fubini 
theorem entails that the Riesz-Dunford integral $f(T)$ coincides with the functional calculus 
$\int_{{\it Sp}(T)} f(z) d E_z$.\\
(iii). 
Let $T= \int_{{\it Sp}(T)} z dE_z$ be a normal operator with ${\it Sp}(T) \cap (-\infty, 0) = \emptyset$. Then for $\epsilon>0$, the Riesz-Dunford integral $f(T+ \epsilon I)$ is well-defined. 
By taking $\theta$ so that  
${\it Sp}(T) \subseteq \{ z \in \mathbb{C}: z=0\,\, \text{or}\,\, |\arg z| \leqq \theta \}$, we obtain 
$$||f(T+ \epsilon I) - \int_{{\it Sp}(T)} f(z) dE_z|| \leqq 
 \sup \{|f(z+ \epsilon) - f(z)|: z \in {\it Sp}(T)\} \to 0 \,\, (\epsilon \to 0).$$
\end{remark}

\begin{example}\label{Ex:2-1}
Let ${\it Sp}(T) \cap (-\infty, 0] = \emptyset$. Then 
\begin{equation}
T^r= \frac{\sin r \pi}{\pi} \int_0^{\infty} 
 \frac{1}{\lambda} (\lambda I + T)^{-1}T\lambda ^r d\lambda \quad (0<r<1). \label{eq:4}
\end{equation}
$(T^{1/2})^2 = T$, but, needles to say, there are plenty of $X$ such that $X^2=T$. 
\end{example}
\section{Accretive Operators}
Let $A$ be an accretive operator. Then 
$${\it Sp}(A)\subseteq \{z: \Re(z)\geqq 0\}, \quad  
||(A+ \lambda I)\mathbf{x}||\geq \lambda ||\mathbf{x}|| \,\,\,\, (\forall\lambda > 0, \,\, \forall\mathbf{x}).$$
 This implies that
\begin{equation}
 ||(A + \lambda I)^{-1}||\leqq \frac{1}{\lambda}. 
\label{eq:5}
\end{equation} and that   
$$\frac{1}{\lambda}I - (A + \lambda I)^{-1}$$
 is accretive too. \par
It is obvious that 
$$( A {\bf x}, {\bf x}) = 0\, \Longrightarrow \, \Re(A){\bf x} = 0\,  \Longrightarrow \,A{\bf x}= - A^*{\bf x},
$$ and  
$$\mathcal{N}(A + i \mu I) = \mathcal{N}( A^* - i\mu I)$$
for every real number $\mu$, where  
$\mathcal{N}(T):= \{\mathbf{x}: T \mathbf{x} = 0\}$. 
\begin{lemma}\label{Lem:3-1}
Let $A$ be an accretive operator and $n$ a natural number. Then 
$$\mathcal{N}(A^n) = \mathcal{N}(A).$$
\end{lemma}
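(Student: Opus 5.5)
It suffices to prove $\mathcal{N}(A^2)=\mathcal{N}(A)$. Indeed, if this holds for every accretive operator, then an induction on $n$ gives the general case. Suppose $\mathcal{N}(A^{n})=\mathcal{N}(A)$ and $A^{n+1}\mathbf{x}=0$. Then $A^{n-1}\mathbf{x}\in\mathcal{N}(A^2)=\mathcal{N}(A)$, so $A^{n}\mathbf{x}=0$, and hence $\mathbf{x}\in\mathcal{N}(A^n)=\mathcal{N}(A)$. The inclusion $\mathcal{N}(A)\subseteq\mathcal{N}(A^n)$ is trivial.

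For $n=2$, let $A^2\mathbf{x}=0$ and put $\mathbf{y}=A\mathbf{x}$, so that $A\mathbf{y}=0$. The observation just before the lemma is that $\mathcal{N}(A+i\mu I)=\mathcal{N}(A^*-i\mu I)$ for real $\mu$. With $\mu=0$ this gives $\mathcal{N}(A)=\mathcal{N}(A^*)$, and we recall why: from $A\mathbf{y}=0$ we get $(A\mathbf{y},\mathbf{y})=0$, hence $\Re(A)\mathbf{y}=0$ because $\Re(A)\geqq 0$ and $\|\Re(A)^{1/2}\mathbf{y}\|^2=\Re(A\mathbf{y},\mathbf{y})=0$. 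Then $A^*\mathbf{y}=2\Re(A)\mathbf{y}-A\mathbf{y}=0$. Consequently
$$\|A\mathbf{x}\|^2=(A\mathbf{x},\mathbf{y})=(\mathbf{x},A^*\mathbf{y})=0,$$
so $\mathbf{x}\in\mathcal{N}(A)$.

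The only point that needs any care is the step $(A\mathbf{y},\mathbf{y})=0\Rightarrow\Re(A)\mathbf{y}=0$. It uses only that $\Re(A)$ is a positive operator, through its square root or, equivalently, the Cauchy--Schwarz inequality for the form $(\Re(A)\cdot,\cdot)$. No spectral theory of $A$ itself is required, so the argument works for any bounded accretive operator.
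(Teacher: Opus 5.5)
Your proof is correct and is essentially the paper's argument. Both rest on $\mathcal{N}(A)=\mathcal{N}(A^*)$ for accretive $A$, which turns $A(A^{k}\mathbf{x})=\mathbf{0}$ into $A^*A(A^{k-1}\mathbf{x})=\mathbf{0}$ and hence $A(A^{k-1}\mathbf{x})=\mathbf{0}$; you merely isolate this step as the case $n=2$ and run a formal induction, whereas the paper repeats it downward from $A^{n-1}\mathbf{x}$.
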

\begin{proof}  $\mathcal{N}(A) \subseteqq \mathcal{N}(A^n)$ is apparent. 
For any ${\bf x} \in \mathcal{N}(A^n)$ we have $A A^{n-1} {\bf x} = {\bf 0}$. We hence 
get $A^* A A^{n-2} {\bf x} = A^* A^{n-1} {\bf x} = {\bf 0}$, which 
$A A^{n-2} {\bf x} = {\bf 0}$ follows from. By repeating this way, we arrive at  
$A {\bf x} = {\bf 0}$.
\end{proof}
Let us define a function $f(A)$ of an accretive operator $A$ by $f(z)$ given by \eqref{eq:2}. We need to 
pay attention to that the case $0\in \it{Sp}(A)$ 
is not excluded. 

\begin{theorem}\label{Th:3-1}
Let $A$ be an accretive operator and  $f(t)$ an operator monotone function
 on $(0, \infty)$ expressed by \eqref{eq:1}. 
Then   
\begin{equation}
 f(A): = f(0)I + b A + \int_0^{\infty} (\frac{1}{\lambda} I - (\lambda I + A)^{-1})
d\mu(\lambda) \label{eq:6}
\end{equation}
 is a bounded operator. \\
Let $f(A+ sI)$ be functions defined by the Riesz-Dunford integral for $s>0$. Then 
$$f(A+ sI)  \Longrightarrow f(A) \quad (s \to +0).$$
Further, $f(A)^* = f(A^*)$, and $f(A)$ remains accretive if $f(0)\geqq 0$. 
\end{theorem}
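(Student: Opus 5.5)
The proof rests on the resolvent bound \eqref{eq:5} and dominated convergence on the integral in \eqref{eq:6}. Everything reduces to a uniform bound on the integrand in operator norm.

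\emph{Step 1: boundedness of \eqref{eq:6}.} For $\lambda>0$ the identity
\[
\frac{1}{\lambda}I-(\lambda I+A)^{-1}=\frac{1}{\lambda}(\lambda I+A)^{-1}A
\]
holds, and \eqref{eq:5} gives
\[
\Bigl\|\frac{1}{\lambda}I-(\lambda I+A)^{-1}\Bigr\|\leqq \min\Bigl\{\frac{2}{\lambda},\ \frac{\|A\|}{\lambda^{2}}\Bigr\}.
\]
By \eqref{eq:1} we have $\int_0^1\lambda^{-1}d\mu<\infty$ and $\int_1^\infty\lambda^{-2}d\mu<\infty$, the latter because $\int(1+\lambda^2)^{-1}d\mu<\infty$. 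Hence the norm of the integrand is $\mu$-integrable. The integral therefore exists as a Bochner integral (as in Remark~\ref{Re:2-1}), or weakly, and
\[
\|f(A)\|\leqq |f(0)|+b\|A\|+\int_0^\infty\Bigl\|\frac{1}{\lambda}I-(\lambda I+A)^{-1}\Bigr\|\,d\mu(\lambda)<\infty .
\]

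\emph{Step 2: convergence.} For $s>0$ the operator $A+sI$ is strictly accretive, so $Sp(A+sI)\subseteq\{\Re z\geqq s\}$ avoids $(-\infty,0]$. Proposition~\ref{Prop:2-2} then applies, and $f(A+sI)$ is given by \eqref{eq:3} with $T=A+sI$. Subtracting \eqref{eq:6} gives
\[
f(A+sI)-f(A)=bsI+\int_0^\infty\bigl((\lambda I+A)^{-1}-((\lambda+s)I+A)^{-1}\bigr)\,d\mu(\lambda).
\]
The integrand is $s(\lambda I+A)^{-1}((\lambda+s)I+A)^{-1}$, whose norm is at most $s/(\lambda(\lambda+s))\leqq s/\lambda^2$. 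It also tends to $0$ pointwise in $\lambda$ as $s\to0$. For domination, write the integrand as the difference of the two operators $\lambda^{-1}I-(\lambda I+A)^{-1}$ and $\lambda^{-1}I-((\lambda+s)I+A)^{-1}$. The latter equals $\lambda^{-1}((\lambda+s)I+A)^{-1}(sI+A)$, and since $A+sI$ is accretive with norm at most $\|A\|+1$ for $s\leqq1$, Step 1 applied to $A+sI$ gives the same bound $\min\{2/\lambda,(\|A\|+1)/\lambda^2\}$. Both terms are therefore dominated by an integrable function independent of $s$. Dominated convergence for norms of Bochner integrals then yields
\[
\|f(A+sI)-f(A)\|\leqq bs+\int_0^\infty\bigl\|(\lambda I+A)^{-1}-((\lambda+s)I+A)^{-1}\bigr\|\,d\mu(\lambda)\to0 .
\]
This domination near $\lambda=0$, uniformly in $s$, is the only delicate point, and the splitting above handles it.

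\emph{Step 3: adjoint and accretivity.} Taking adjoints termwise in \eqref{eq:6} and using $((\lambda I+A)^{-1})^*=(\lambda I+A^*)^{-1}$ gives $f(A)^*=f(A^*)$; note that $A^*$ is also accretive. For accretivity, $bA$ is accretive since $b\geqq0$. Each $\lambda^{-1}I-(\lambda I+A)^{-1}$ is accretive, as noted after \eqref{eq:5}. Hence
\[
\Re\bigl(f(A)\mathbf{x},\mathbf{x}\bigr)=f(0)\|\mathbf{x}\|^2+b\,\Re(A\mathbf{x},\mathbf{x})+\int_0^\infty\Re\bigl(\bigl(\tfrac{1}{\lambda}I-(\lambda I+A)^{-1}\bigr)\mathbf{x},\mathbf{x}\bigr)\,d\mu(\lambda)\geqq0
\]
when $f(0)\geqq0$.
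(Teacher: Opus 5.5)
Your proof is correct and follows the paper's argument: the same integrability bound for \eqref{eq:6}, the same estimate $\|f(A+sI)-f(A)\|\leqq bs+\int_0^\infty h_s(\lambda)\,d\mu(\lambda)$ closed by dominated convergence, and the adjoint and accretivity read off termwise from \eqref{eq:6}. The only difference is cosmetic: the paper dominates $h_s$ directly by $\min\{2/\lambda,1/\lambda^2\}$ for $0<s<1$, with the $2/\lambda$ coming straight from the triangle inequality and \eqref{eq:5}, so your splitting step is valid but not actually needed.
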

\begin{proof}
 Since \\
$||\frac{1}{\lambda} I - (\lambda I  + A)^{-1}|| \leqq  2\frac{1}{\lambda} $ and $\lim_{\lambda \to + \infty} \lambda^2 ||\frac{1}{\lambda} I - (\lambda I  + A)^{-1}|| = || A||$, 
 $$\int_0^{\infty} ||\frac{1}{\lambda} I - (\lambda I  + A)^{-1}|| d\mu(\lambda)
<\infty.$$
$f(A)$, the right side of \eqref{eq:6}, is therefore well-defined and bounded.  
Express $f(A + sI)$ as \eqref{eq:3}. Then 
\begin{align*}
&( f(A + s I) {\bf x}, {\bf y}) -( f(A) {\bf x}, {\bf y}) = 
 b  s ({\bf x}, {\bf y}) \\ 
&+ \int_0^{\infty} \left(( (\lambda  I + A)^{-1} {\bf x}, {\bf y}) - 
( ((\lambda + s) I + A)^{-1} {\bf x}, {\bf y}) \right)
 d\mu(\lambda),
\end{align*}
and consequently  
$$|| f(A + sI) - f(A)|| \leqq b  s +  \int_0^{\infty} ||(\lambda  I + A)^{-1}   - 
((\lambda + s) I + A)^{-1} || d\mu (\lambda).
$$
Write the integrand $h_s (\lambda)$. Then 
$$ h_s(\lambda) \leqq  g(\lambda) \quad  (0<s<1, \, 0<\lambda <\infty),$$
where 
$$ g(\lambda): =
\begin{cases}
\frac{2}{\lambda}  & \text{on} \,\, 0 <\lambda \leqq \frac{1}{2}\\
\frac{1}{\lambda^2}  & \text{on} \,\, \frac{1}{2}\leqq \lambda <\infty.
\end{cases}
$$
$g(\lambda)$ is obviously integrable. 
Since $\mu(\{0\}) =0$ and 
\begin{center}
$||(\lambda  I + A)^{-1}   - 
((\lambda + s) I + A)^{-1}|| \to 0$ as $s\to + 0$ for $\lambda >0$,
\end{center}
by the Lebesgue theorem,  
$$|| f(A + sI) - f(A)|| \to 0 \quad (s\to +0).$$
 From \eqref{eq:6} it directly follows that $f(A^{*})= f(A)^{*}$ and that 
$f(A)$ is accretive, because $f(0)\geqq 0$ and 
$\frac{1}{\lambda}I - (A + \lambda I)^{-1}$ is accretive.
\end{proof}
This theorem shows that \eqref{eq:4} holds for an accretive operator $A$ 
even if $0\in \it{Sp}(A)$, which is known as the formula by Balakrishnan\cite{Bal} (also see 
\cite{Kato-book, N-F}).\par 
\begin{lemma}\label{Le:2}
Let $\{A_n\}$ be a sequence of accretive operators which converges to an operator 
$A$ in the operator norm. Then for an operator monotone function 
$f(t) >0$ on $(0, \infty)$
$$f (A_n) \Longrightarrow f(A).$$
\end{lemma}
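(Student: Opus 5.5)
We work directly from the integral representation \eqref{eq:6} of Theorem \ref{Th:3-1}, which defines $f(A_n)$ and $f(A)$ for the accretive operators $A_n$ and $A$. The limit $A$ is accretive, since $\Re(A_n)\to\Re(A)$ in norm and the set of positive operators is norm closed. Because $f>0$ on $(0,\infty)$, we have $f(0)\geqq 0$ finite, so \eqref{eq:1} applies. Subtracting the two representations gives
\begin{equation*}
f(A_n)-f(A) = b(A_n - A) + \int_0^{\infty}\left((\lambda I + A)^{-1} - (\lambda I + A_n)^{-1}\right)d\mu(\lambda),
\end{equation*}
and hence
\begin{equation*}
\|f(A_n)-f(A)\| \leqq b\|A_n-A\| + \int_0^\infty h_n(\lambda)\,d\mu(\lambda), \qquad h_n(\lambda):=\|(\lambda I + A)^{-1} - (\lambda I + A_n)^{-1}\|.
\end{equation*}
The first term clearly tends to $0$, so the task reduces to showing $\int_0^\infty h_n\,d\mu\to 0$. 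We do this with the Lebesgue dominated convergence theorem, as in the proof of Theorem \ref{Th:3-1}.

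Pointwise convergence comes from the resolvent identity
\begin{equation*}
(\lambda I + A)^{-1} - (\lambda I + A_n)^{-1} = (\lambda I + A)^{-1}(A_n - A)(\lambda I + A_n)^{-1}.
\end{equation*}
Combined with \eqref{eq:5} for both $A$ and $A_n$, this yields $h_n(\lambda)\leqq \|A_n-A\|/\lambda^2$. So $h_n(\lambda)\to 0$ for every $\lambda>0$. Since $\mu(\{0\})=0$, this is convergence $\mu$-almost everywhere.

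For a dominating function, we use \eqref{eq:5} once more to get $h_n(\lambda)\leqq 2/\lambda$ for all $\lambda>0$. Since $\sup_n\|A_n-A\|=:M<\infty$, we also have $h_n(\lambda)\leqq M/\lambda^2$. Thus $h_n\leqq g$, where $g(\lambda)=2/\lambda$ on $(0,1]$ and $g(\lambda)=M/\lambda^2$ on $[1,\infty)$. This $g$ is $\mu$-integrable, because $\int_0^1\lambda^{-1}d\mu<\infty$ and $\int_1^\infty\lambda^{-2}d\mu\leqq 2\int_1^\infty(1+\lambda^2)^{-1}d\mu<\infty$ by \eqref{eq:1}.

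Dominated convergence then gives $\int_0^\infty h_n\,d\mu\to 0$, which proves $f(A_n)\Rightarrow f(A)$. The only delicate point is the behaviour near $\lambda=0$, where the resolvent estimate $\|A_n-A\|/\lambda^2$ is not integrable. The uniform bound $2/\lambda$ coming from accretivity, together with $\int_0^1\lambda^{-1}d\mu<\infty$, is exactly what handles this region.
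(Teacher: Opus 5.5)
Your proposal is correct and takes the same approach as the paper. The paper's proof just says to use \eqref{eq:6} for the accretive limit $A$ and repeat the dominated-convergence argument of Theorem~\ref{Th:3-1}. You carry this out in full: the resolvent identity gives the pointwise bound $\|A_n-A\|/\lambda^2$, and accretivity gives the uniform bound $2/\lambda$ near $\lambda=0$.
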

\begin{proof} Since $A$ is accretive, $f(A)$ is given by \eqref{eq:6}. Consequently,  
one can prove this in the same way as the proof of Theorem\ref{Th:3-1}.
\end{proof}
\section{Numerical Range}
\it For $f(t)$ represented by \eqref{eq:1}, 
from now on, we assume $f(t)>0$, namely $f(0)\geqq 0$. 
\rm
\begin{definition}\rm For $0\leqq r \leqq 1$, we write 
$A\in \mathcal{S}_{r}$ if  $|\arg (A {\bf x}, {\bf x}) |\leqq r \pi /2$ unless 
$(A {\bf x}, {\bf x}) = 0$. 
\end{definition}
An accretive operator belongs to $\mathcal{S}_{1}$.

\begin{lemma}\label{Le:4-1}
Let $f(z)$ be its analytic extension to 
$\mathbb{C}\setminus (-\infty, 0]$.   
Then, for $z \in \mathbb{C}$,  
\begin{align*}
 &\Im (z) > 0 \Longrightarrow  0< \arg (f(z)-f(0)) \leqq \arg z < \pi, \\
 &\Im (z) < 0 \Longrightarrow  -\pi < \arg z \leqq \arg (f(z) - f(0)) < 0.
\end{align*}
\end{lemma}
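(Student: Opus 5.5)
Write $z$ with $\Im z>0$ and use the representation \eqref{eq:2}:
$$f(z)-f(0) = bz + \int_0^{\infty}\Bigl(\frac{1}{\lambda}-\frac{1}{\lambda+z}\Bigr)d\mu(\lambda) = bz + \int_0^{\infty}\frac{z}{\lambda(\lambda+z)}\,d\mu(\lambda).$$
The plan is to show that every term in this sum lies in the closed sector $\{w: 0\leqq \arg w\leqq \arg z\}$. Since the sector has opening less than $\pi$, it is a convex cone, so the sum (and the integral, viewed as a limit of positive combinations) stays in it.

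First the term $bz$: it has argument exactly $\arg z$ when $b>0$, and it vanishes when $b=0$. Next the integrand. For $\lambda>0$ we have $\arg\frac{z}{\lambda+z}=\arg z-\arg(\lambda+z)$. Since $\Im z>0$ and $\lambda>0$, the point $\lambda+z$ is $z$ shifted to the right within the upper half plane. Hence $0<\arg(\lambda+z)<\arg z$, and the integrand has argument in the open interval $(0,\arg z)$. Its modulus is positive, and the positive factor $1/\lambda$ does not change the argument.

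Now combine. The integral converges absolutely by Lemma~\ref{Lem:2-1}. An integral of a function taking values in a closed convex cone $K$ against a positive measure lies in $K$, because $K$ is an intersection of half planes through $0$ and each half-plane condition is preserved by integration. So $f(z)-f(0)\in\{w:0\leqq\arg w\leqq\arg z\}$, where $\arg z<\pi$.

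It remains to exclude $f(z)-f(0)=0$ and $\arg(f(z)-f(0))=0$. For the imaginary part,
$$\Im\bigl(f(z)-f(0)\bigr)=b\,\Im z+\int_0^{\infty}\frac{\Im z}{|\lambda+z|^2}\,d\mu(\lambda).$$
This is strictly positive unless $b=0$ and $\mu=0$, that is, unless $f$ is constant. The constant case is degenerate and should be excluded or treated as trivial; I will note this assumption, since otherwise $\arg 0$ is undefined. With $\Im(f(z)-f(0))>0$, the argument lies in $(0,\arg z]$, which is the first assertion.

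The case $\Im z<0$ follows from the symmetry $\overline{f(\bar z)}=f(z)$ of Lemma~\ref{Lem:2-1} together with $f(0)\in\mathbb{R}$. Conjugation negates arguments of points off the real axis.

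The only delicate step is the cone argument for the integral. It is handled by checking the two half-plane inequalities separately: $\Im w\geqq 0$, and $\Im(e^{-i\arg z}w)\leqq 0$. Both are linear in $w$ and are preserved under integration against a positive measure.
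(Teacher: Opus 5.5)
Your proof is correct and takes the same route as the paper: use the representation \eqref{eq:2}, note that $bz$ has argument $\arg z$ and that $0<\arg z-\arg(\lambda+z)<\arg z$ for every $\lambda>0$, then combine the terms. You also spell out steps the paper leaves implicit: integration against a positive measure preserves the convex sector; $\Im(f(z)-f(0))>0$ unless $f$ is constant, which is needed both for strict positivity of the argument and for $\arg(f(z)-f(0))$ to be defined at all; and the case $\Im z<0$ reduces to $\Im z>0$ via $\overline{f(\bar z)}=f(z)$.
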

 \begin{proof} We exploit \eqref{eq:2}.
$f(0)\geqq 0$ and $b\geqq 0$.  Suppose $\Im (z) >0$. Then  $\arg b z = \arg z$ and   
 $ 0< \arg(z) -\arg(\lambda + z) < \arg (z) <\pi$ for every $\lambda >0$.  
We hence get the first implication. The second one similarly follows. 
\end{proof}
We now apply one of Kato's mapping theorems.  For a set $X$, $\overline{co}X$ stands for 
the closed convex hull of $X$. 
\begin{proposition}\label{Prop:4-2} 
Let $f(t)>0$ be an operator monotone function on 
$(0, \infty)$. Then, if $A\in \mathcal{S}_{1}$, 
$$\mathcal{W}(f(A)) \subseteqq  \overline{co}\{f(z):  \Re(z) \geqq 0\} \subseteqq \{z: \Re(z) \geqq 0\}.$$
\end{proposition}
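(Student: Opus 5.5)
The plan is to prove the second inclusion directly from Lemma \ref{Le:4-1}, and to obtain the first from Kato's mapping theorem by approximation with $A+sI$ and Theorem \ref{Th:3-1}.

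For the second inclusion, take $z$ with $\Re z\geqq 0$. If $z$ is on the positive real axis, then $f(z)>0$. If $z=0$, then $f(0)\geqq 0$ by the continuity in Lemma \ref{Lem:2-1}. If $\Im z>0$, Lemma \ref{Le:4-1} gives $0<\arg(f(z)-f(0))\leqq \arg z\leqq \pi/2$, so $\Re(f(z)-f(0))\geqq 0$. Adding $f(0)\geqq 0$ gives $\Re f(z)\geqq 0$. The case $\Im z<0$ is symmetric, or follows from $\overline{f(\overline z)}=f(z)$. Hence $f(\{\Re z\geqq 0\})$ lies in the closed right half plane, which is closed and convex. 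Therefore its closed convex hull lies there too.

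For the first inclusion I will use Kato's mapping theorem. It says that if $\phi$ is analytic on a neighbourhood of $\overline{\mathcal W(T)}$ and suitably regular, for instance if $\phi$ maps a half plane containing $\overline{\mathcal W(T)}$ into itself, then $\mathcal W(\phi(T))\subseteqq \overline{co}\,\phi(\overline{\mathcal W(T)})$. This is the form I intend to cite as ``one of Kato's mapping theorems''.

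Fix $s>0$ and set $T=A+sI$. Then $\overline{\mathcal W(T)}\subseteqq\{\Re z\geqq s\}$, and $f$ is analytic on a neighbourhood of this half plane, which avoids $(-\infty,0]$. Since $f(T)$ is the Riesz--Dunford integral, Kato's theorem gives
$$\mathcal W(f(A+sI))\subseteqq \overline{co}\{f(z):\Re z\geqq s\}\subseteqq \overline{co}\{f(z):\Re z\geqq 0\}.$$
By Theorem \ref{Th:3-1}, $f(A+sI)\Rightarrow f(A)$ in norm. For a unit vector $\mathbf x$ we therefore have $(f(A+sI)\mathbf x,\mathbf x)\to (f(A)\mathbf x,\mathbf x)$. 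The set $K=\overline{co}\{f(z):\Re z\geqq 0\}$ is closed, so $(f(A)\mathbf x,\mathbf x)\in K$. This gives $\mathcal W(f(A))\subseteqq K$.

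The main obstacle is matching the exact hypotheses of Kato's theorem to the unbounded half plane $\{\Re z\geqq s\}$. If the theorem is available only for compact sets or discs, I would use a M\"obius substitution. Put $C=(T-sI)(T+sI)^{-1}$ up to scaling. Since $T$ is accretive with $\Re T\geqq s$, $C$ is a contraction, and von Neumann/Kato type results apply on the disc. The composed function $f\circ$(M\"obius) is analytic on the open disc and continuous up to the relevant closed set; continuity near $z=0$ is handled by Lemma \ref{Lem:2-1}. If even that is unavailable, I would approximate by $rC$ with $r<1$ and pass to the limit using norm continuity as above.
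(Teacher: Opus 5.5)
Your argument is essentially the paper's proof. You apply Kato's theorem to $f$ on $A+sI$ (the paper writes this as $f_s(z)=f(z+s)$ applied to $A$), pass to the limit using the norm convergence from Theorem~\ref{Th:3-1} and the closedness of the hull, and get the second inclusion from Lemma~\ref{Le:4-1}. One caution: your paraphrase $\mathcal W(\phi(T))\subseteqq \overline{co}\,\phi(\overline{\mathcal W(T)})$ is false in general, so cite the half-plane form $\mathcal W(\phi(T))\subseteqq\overline{co}\,\phi(H)$ for a half-plane $H\supseteqq\overline{\mathcal W(T)}$, which is exactly what you actually use with $H=\{\Re z\geqq s\}$. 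For a counterexample to the paraphrase, take $\phi(z)=\sqrt z$ and $T=I+rS$ with $0<r<1$ and $S=\begin{pmatrix}0&2\\0&0\end{pmatrix}$: then $T^{1/2}=I+\frac r2 S$ has $1+\frac r2$ in its numerical range, while $\Re\sqrt z\leqq\sqrt{1+r}<1+\frac r2$ on $\overline{\mathcal W(T)}$.
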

\begin{proof} For $s >0$ put 
$f_{s}(z) \!:= f(z + s)$, which is analytic on $\mathbb{C} \smallsetminus (-\infty, -s]$.
 By one of Kato's numerical range theorems \cite{Kato}, 
 $$\mathcal{W}(f_{s} (A)) \subseteqq  \,\, \overline{co}\{f_{s} (z): \Re(z)\geqq 0 \},$$
 which is clearly a subset of  $\overline{co}\{f(z): \Re(z)\geqq 0 \}$. 
Since $f(A+ s I) = f_{s} (A)$, 
we derive, by using Theorem\,\ref{Th:3-1} and 
Lemma\ref{Le:4-1}, 
$$\mathcal{W}(f (A))\subseteqq\, \overline{co}\{f(z): \Re(z)\geqq 0 \} \subseteqq \{z: \Re(z) \geqq 0\}.$$
\end{proof} 

\begin{remark}\rm
\cite{Kato} consists of many important theorems, however 
a counter example for the Remark in it was given in Remark of \cite{U-}. 
\end{remark}\par  

\begin{corollary}\label{Cor:4-1}\rm 
(i).  $A\in \mathcal{S}_{1} \Longrightarrow  A^r \in \mathcal{S}_{r}\,\, (0<r<1)$.\\ 
(ii).  $A\in \mathcal{S}_{r} \Longrightarrow  f(A)\in \mathcal{S}_{r}.$
\end{corollary}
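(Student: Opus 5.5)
Both parts follow the pattern of Proposition \ref{Prop:4-2}: first we replace $A$ by $A+sI$, then we apply Kato's mapping theorem to a suitable function, and finally we let $s\to+0$ using Theorem \ref{Th:3-1}. The extra ingredient is a sector version of Lemma \ref{Le:4-1}. Write $\Sigma_r:=\{z: z=0 \text{ or } |\arg z|\leqq r\pi/2\}$, which is a closed convex cone for $0\leqq r\leqq 1$. The condition $A\in\mathcal{S}_r$ says exactly that $\mathcal{W}(A)\subseteq\Sigma_r$. Two facts about sectors are needed:
\begin{itemize}
\item For $z\in\Sigma_1$ we have $z^r\in\Sigma_r$, because $\arg z^r=r\arg z$ for the principal branch.
\item If $f(t)>0$ is operator monotone, then $f(\Sigma_r)\subseteq\Sigma_r$.
\end{itemize}
The second fact follows from Lemma \ref{Le:4-1}. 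For $\Im z>0$ we get $0<\arg(f(z)-f(0))\leqq\arg z\leqq r\pi/2$. Since $f(0)\geqq0$, adding $f(0)$ keeps the argument in $[0,\arg z]$. The case $\Im z<0$ is symmetric, and for $z\geqq0$ the value $f(z)$ is positive. Continuity at $0$ (Lemma \ref{Lem:2-1}) covers $z=0$.

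For (i), apply Proposition \ref{Prop:4-2}-type reasoning to $f(t)=t^r$, which is operator monotone with $f(0)=0$. For $s>0$, Kato's theorem applied to $f_s(z)=(z+s)^r$, which is analytic near the closed right half plane, gives
$$\mathcal{W}((A+sI)^r)\subseteq\overline{co}\{(z+s)^r:\Re z\geqq0\}\subseteq\overline{co}\,\Sigma_r=\Sigma_r .$$
By Theorem \ref{Th:3-1}, $(A+sI)^r\to A^r$ in norm. For each unit vector $\mathbf{x}$, $((A+sI)^r\mathbf{x},\mathbf{x})\to(A^r\mathbf{x},\mathbf{x})$, and $\Sigma_r$ is closed, so $(A^r\mathbf{x},\mathbf{x})\in\Sigma_r$, i.e.\ $A^r\in\mathcal{S}_r$.

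For (ii), we have $\mathcal{W}(A)\subseteq\Sigma_r$, hence $\overline{\mathcal{W}(A)}\subseteq\Sigma_r$. The obstacle is that Kato's theorem as used in Proposition \ref{Prop:4-2} concerns the half plane, whereas here we want the smaller sector. I would first try the sector form of Kato's result: if $\overline{\mathcal{W}(T)}$ lies in a closed convex set $K$ on which $g$ is analytic (in a neighbourhood) and of suitable type, then $\mathcal{W}(g(T))\subseteq\overline{co}\,g(K)$. Apply it to $T=A$ with $g(z)=f(z+s)$ and $K=\Sigma_r$. Since $\Sigma_r+s\subseteq\Sigma_r$ (the set is a convex cone containing $s>0$), we get
$$g(\Sigma_r)=f(\Sigma_r+s)\subseteq f(\Sigma_r)\subseteq\Sigma_r ,$$
and therefore $\mathcal{W}(f(A+sI))\subseteq\Sigma_r$. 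If Kato's theorem is only available for half planes, the fallback is to reduce to $r=1$ via a rotation. The set $\Sigma_r$ is the intersection of the two half planes $\{\Re(e^{\pm i(1-r)\pi/2}z)\geqq0\}$, so we would apply the half-plane statement to the rotated operators $e^{\pm i(1-r)\pi/2}A$ with the rotated function, using the Pick-function property of $f$ (Lemma \ref{Le:4-1}) to check that the rotated images still lie in the right half plane. This reduction is the step I expect to need the most care. Finally, let $s\to+0$ using Theorem \ref{Th:3-1} and the closedness of $\Sigma_r$, exactly as in (i).
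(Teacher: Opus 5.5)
\textbf{Part (i).} This is exactly the paper's argument: it is Proposition~\ref{Prop:4-2} applied to $t^r$, together with $\overline{co}\{z^r:\Re z\geqq 0\}=\Sigma_r$.

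\textbf{Part (ii): the first route fails.} There is no ``sector form'' of Kato's theorem of the kind you describe. For a general analytic $g$, the inclusion $\mathcal{W}(g(T))\subseteqq\overline{co}\,g(K)$ fails when $K$ is a sector. Take $T=\begin{pmatrix}1&\sqrt2\\0&1\end{pmatrix}$. Then $\mathcal{W}(T)$ is the closed disc of radius $\sqrt2/2$ about $1$. This disc lies in $\Sigma_{1/2}$ and touches the rays $\arg z=\pm\pi/4$. Now $g(z)=z^2$ maps $\Sigma_{1/2}$ onto the closed right half plane. Yet $\mathcal{W}(T^2)$ is the disc of radius $\sqrt2$ about $1$, which contains $1-\sqrt2<0$. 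So ``of suitable type'' would have to encode precisely the special structure of $f$, and you have not said what that is. This route must be discarded.

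\textbf{Part (ii): the rotation fallback works for $0<r\leqq1$, with two corrections.} Put $\phi=(1-r)\pi/2$ and $H_{\pm\phi}=\{z:\Re(e^{\pm i\phi}z)\geqq 0\}$. You apply the half-plane theorem to the accretive operator $e^{\pm i\phi}A$ and the function $G_\pm(w)=e^{\pm i\phi}f(e^{\mp i\phi}w+s)$.

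First, the check this requires is $f(H_{\pm\phi}+s)\subseteqq H_{\pm\phi}$, not your bullet $f(\Sigma_r)\subseteqq\Sigma_r$. It does follow from Lemma~\ref{Le:4-1} and $f(0)\geqq0$:
\begin{itemize}
\item $H_\phi\setminus\{0\}$ is the sector $-\pi+r\pi/2\leqq\arg z\leqq r\pi/2$, and $H_{-\phi}$ is its reflection;
\item $H_{\pm\phi}+s\subseteqq H_{\pm\phi}$, since each is a convex cone containing $s>0$;
\item on each open half plane, $f$ pulls $\arg z$ toward $0$.
\end{itemize}

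Second, $G_\pm$ is analytic near the closed right half plane only because $r>0$ keeps $H_{\pm\phi}+s$ away from $(-\infty,0]$. For $r=0$ the two half planes intersect in all of $\mathbb{R}$, not in $[0,\infty)$, and the shifted half planes meet the cut. So $r=0$ must be treated separately; it is trivial, since then $A\geqq0$ and $f(A)\geqq0$.

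\textbf{Comparison with the paper.} The paper's proof of (ii) is far shorter and uses neither Kato's theorem nor the limit $s\to+0$. It works directly from \eqref{eq:6}. With $\mathbf{y}=(\lambda I+A)^{-1}\mathbf{x}$,
\[
\Bigl(\bigl(\tfrac1\lambda I-(\lambda I+A)^{-1}\bigr)\mathbf{x},\mathbf{x}\Bigr)=\tfrac1\lambda\bigl(A\mathbf{y},(\lambda I+A)\mathbf{y}\bigr)=(A\mathbf{y},\mathbf{y})+\tfrac1\lambda\|A\mathbf{y}\|^2\in\Sigma_r .
\]
The terms $f(0)\|\mathbf{x}\|^2$ and $b(A\mathbf{x},\mathbf{x})$ also lie in $\Sigma_r$. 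Integrating against $\mu$ inside the closed convex cone $\Sigma_r$ then gives $(f(A)\mathbf{x},\mathbf{x})\in\Sigma_r$ for all $0\leqq r\leqq 1$ at once.

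Your repaired argument gains some generality: it applies to any $g$ analytic across the two bounding half planes that maps each of them into itself. The cost is the von Neumann--Kato machinery, the limiting step, and the edge-case bookkeeping.
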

\begin{proof} 
(i) follows from Proposition\,\ref{Prop:4-2}.\\
Apply the equality 
$$\arg ( (\frac{1}{\lambda}I  - (\lambda  I + A)^{-1}) {\bf x}, {\bf x}) =
\arg (A {\bf y}, \lambda {\bf y} + A {\bf y}),$$
where $\lambda >0$ and ${\bf y} = (\lambda I + A)^{-1}{\bf x}$,
 to Theorem\,\ref{Th:3-1}, and get (ii).
\end{proof}
\begin{proposition}\label{Prop:4-1} Let $n\geqq 2$.\\
{\rm (i)}. Let $A$ be in $\mathcal{S}_{r}$ with $0<r<2/n$ and $B$ in $\mathcal{S}_{2/n}$. If $AB=BA$ and $A^n =B^n$, then $A=B$.\\
{\rm (ii)}. An accretive operator $A$ has the unique  
n-th root $A^{1/n}$ in $\mathcal{S}_{1/n}$. (This fact is well-known; for example, 
 refer to P.178 of \cite{N-F})
\end{proposition}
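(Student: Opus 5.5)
Since $AB=BA$ and $A^n=B^n$, we have the factorization
$$0=A^n-B^n=(A-B)\,C,\qquad C:=\sum_{k=0}^{n-1}A^{n-1-k}B^{k},$$
and $C$ commutes with $A-B$. The plan is to show that $C$ has a dense range, or better, that $C$ is injective with dense range in a form that forces $A-B=0$. Working directly with $C$ is awkward because it is not normal. Instead I will argue through the spectrum and the numerical range, which control sectoriality.

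\textbf{Step 1: locate the spectra.} From $A\in\mathcal{S}_r$ we get $Sp(A)\subseteq\overline{\mathcal W(A)}\subseteq\{z=0 \text{ or } |\arg z|\le r\pi/2\}$. Similarly $Sp(B)\subseteq\{z=0 \text{ or } |\arg z|\le \pi/n\}$. Since $A$ and $B$ commute, Gelfand theory in a maximal commutative Banach subalgebra containing $A$ and $B$ gives characters $\chi$, with $Sp(A-B)=\{\chi(A)-\chi(B)\}$ and $\chi(A)^n=\chi(B)^n$. Put $a=\chi(A)$ and $b=\chi(B)$, so $a^n=b^n$. If $a\ne0$, then $b=a\omega$ with $\omega$ an $n$-th root of unity. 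Because $|\arg a|\le r\pi/2<\pi/n$ and $|\arg b|\le\pi/n$, the case $\omega\ne1$ would give $|\arg b|>\pi/n$ after the appropriate reduction, so $\omega=1$. If $a=0$, then $b=0$. Hence $Sp(A-B)=\{0\}$, i.e.\ $A-B$ is quasinilpotent.

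\textbf{Step 2: upgrade quasinilpotence to $A=B$.} This is the main obstacle. The spectral argument alone cannot finish it, since commuting operators can differ by a nonzero quasinilpotent. Here I will use $(A-B)C=0$. The key point is that $\chi(C)=\sum_k a^{n-1-k}b^k=na^{n-1}$, which vanishes only when $a=0$. So I will try to show that $C$ is injective with dense range, using the sector positions:
\begin{itemize}
\item each product $A^{n-1-k}B^k$ should again be sectorial in an open half-plane-type sector, because the angles add to less than $\pi/2$ when applied to suitable vectors;
\item for this I would first replace $A,B$ by $A+\epsilon I,\,B+\epsilon I$. This destroys $A^n=B^n$, so instead I keep the identity and handle kernels.
\end{itemize}

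\textbf{Step 3: handle the kernels.} Lemma~\ref{Lem:3-1}, applied to $\mathcal{S}_1$ operators, gives $\mathcal N(A^n)=\mathcal N(A)$ and $\mathcal N(B^n)=\mathcal N(B)$. Hence $\mathcal N(A)=\mathcal N(B)=:\mathcal N$. Accretivity gives $\mathcal N(A)=\mathcal N(A^*)$, so $\mathcal N$ reduces both $A$ and $B$, and on $\mathcal N$ both operators vanish. On $\mathcal N^\perp$ both are injective with dense range. There I would show that $C$ has dense range by a numerical-range estimate: writing $C$ via the commuting functional calculus as $A^{n-1}\prod_{\omega\ne1}(I-\omega B A^{-1})$ is delicate, since $A^{-1}$ is unbounded. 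So my fallback is to compare $\mathcal{R}(C^*)$ with $\mathcal{R}(A^{*})$ through Step 1 applied to $\chi(C)\ne0$ away from $0$. Once $C$ has dense range, $(A-B)C=0$ gives $A=B$ on $\mathcal N^\perp$, and hence $A=B$ everywhere.

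\textbf{Part (ii).} Existence follows from Theorem~\ref{Th:3-1} with $f(t)=t^{1/n}$, as the norm limit of $(A+sI)^{1/n}$. Then $\big((A+sI)^{1/n}\big)^n=A+sI$ passes to the limit, giving $(A^{1/n})^n=A$. Corollary~\ref{Cor:4-1}(i) gives $A^{1/n}\in\mathcal S_{1/n}$. For uniqueness, let $B\in\mathcal S_{1/n}$ with $B^n=A$. Then $B$ commutes with $A$, hence with every $(\lambda I+A)^{-1}$, hence with $A^{1/n}$ by \eqref{eq:6}. Since $A^{1/n}\in\mathcal S_{r}$ for every $r>1/n$ close to $1/n$, and $1/n<2/n$, part (i) yields $B=A^{1/n}$.
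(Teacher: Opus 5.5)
There is a genuine gap in part (i), at the step you flag as ``the main obstacle'': you never show that $C=\sum_k A^{n-1-k}B^k$ has dense range (or is injective) on $\mathcal N^\perp$. None of the routes you suggest gets there:
\begin{itemize}
\item the $\epsilon I$ perturbation you abandon yourself;
\item the claim that products $A^{n-1-k}B^k$ of commuting non-normal sectorial operators are again sectorial is unjustified, since $\mathcal W(A^2)$ is not controlled by $\mathcal W(A)$;
\item the ``fallback'' cannot work in principle. It uses only $\chi(C)=na^{n-1}\neq 0$ when $a=\chi(A)\neq 0$, which is purely spectral information. Spectral information says nothing about injectivity or range density at $0$.
\end{itemize}
On $\mathcal N^\perp$ the operator $A$ is injective, but in infinite dimensions it can still have $0$ in its spectrum. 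Then $0$ is in the spectrum of $C$ and Gelfand theory gives no control. Your Step 1 (quasinilpotence of $A-B$) is correct, but as you note it does not finish, and it turns out to be unnecessary.

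The missing idea is to argue vector by vector with the numerical range instead of the spectrum.

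First, since $C$ commutes with $A-B$, you have $C(A-B)=0$, so injectivity of $C$ suffices; dense range is not needed.

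Second, for commuting $A,B$ factor $C=\prod_{k=1}^{n-1}(A-\omega^k B)$ with $\omega=e^{2\pi i/n}$, and show each factor is injective on $\mathcal N^\perp$. Suppose $(A-\omega^kB)\mathbf y=0$. Then $(A\mathbf y,\mathbf y)=\omega^k(B\mathbf y,\mathbf y)$. The left side, if nonzero, has $|\arg|\le r\pi/2<\pi/n$. The right side, if nonzero, has argument in $[(2k-1)\pi/n,(2k+1)\pi/n]$ modulo $2\pi$. So both sides vanish.

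Third, because $r<1$, $(A\mathbf y,\mathbf y)=0$ forces $A\mathbf y=0$. The paper gets this from the elliptical range of the compression of $A$ to the span of $\mathbf y$ and $A\mathbf y$. Alternatively, write $A=H+iK$; then $A\in\mathcal S_r$ gives $-cH\le K\le cH$ with $c=\tan(r\pi/2)$. Hence $H\mathbf y=0$ and $(K+cH)\mathbf y=0$, so $K\mathbf y=0$. Injectivity of $A$ on $\mathcal N^\perp$ then gives $\mathbf y=0$. This is exactly the paper's proof.

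Your kernel reduction in Step 3 matches the paper. Your part (ii) is correct given (i), and your explicit check that $B$ commutes with $A^{1/n}$ (through the resolvents in \eqref{eq:6}) supplies a detail the paper leaves implicit.
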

\begin{proof}
Because of Lemma\,\ref{Lem:3-1}, $\mathcal{N}(B)= \mathcal{N}(B^n) 
=\mathcal{N}(A^n) = \mathcal{N}(A)$, which reduces $A$ and $B$. 
Hence, we may assume $A$ and $B$ are both injective. 
Since $A$ and $B$ are commutative, 
$$0= A^n - B^n = (A -\omega ^{n-1} B)\cdots (A - \omega B) 
 (A - B),$$
where $\omega = e^{\frac{2\pi}{n} i}$. 
Assume 
 $(A - \omega^{n-1} B) \mathbf{y} =0$. Then  
$(A \mathbf{y}, \mathbf{y}) = \omega^{n-1} (B \mathbf{y}, \mathbf{y}) $, and hence   
each side vanishes.  $(A \mathbf{y}, \mathbf{y}) =0$ yields $A \mathbf{y} = 0$. Indeed, 
assume  $\mathbf{v}\!:=A \mathbf{y} \ne 0$, and let  $A'$ be 
the compression of $A$ to the space spanned by 
$\mathbf{y}$ and $\mathbf{v}$. Then  $\mathcal{W}(A')$ is an ellipse 
or a segment with an end point $0$ (see \cite{D-1}); but the latter case deduces $A'$ is normal and $(A' \mathbf{y},  \mathbf{y}) =0$, which results in  
$A' \mathbf{y} = 0$; this contradicts to the assumption. On the other hand, 
the former case contradicts to $A' \in \mathcal{S}_{r}$.
In the end $A \mathbf{y} = {\bf 0}$. Thus
$\mathbf{y}= 0$ arises. By repeating 
this procedure we finally get $A = B$.\\
(ii). By Corollary\ref{Cor:4-1}, $A^{1/n} \in \mathcal{S}_{1/n}\subseteq \mathcal{S}_{2/n}$. 
Assume $B \in \mathcal{S}_{1/n}$ and $A=B^n$. Then by (i) 
$B= A^{1/n}$.
\end{proof}


\section{Operator Means}
The harmonic 
mean $A \, ! \, B$ is defined by $(A^{-1} \nabla B^{-1} )^{-1}$ if $A$, $B$ and $A^{-1} \nabla B^{-1}$ 
are all invertible. We note that the invertibility of $A$ and $B$ does not imply 
that of $A+B$. 
Suppose $A + B$ is invertible. Then for $\epsilon >0$ 
$ (A + \epsilon) \, ! \, (B + \epsilon)$ converges to $2 A (A + B)^{-1} B = 2B (A + B)^{-1} A$ as 
$\epsilon \to 0$ in the norm. We extend the definition of the harmonic mean 
as follows:
\begin{definition}\label{Def:5-1}
Let $A$ and $B$ be accretive operators such that $A + B$ is invertible. Then we define 
the harmonic mean of them by 
$$A \, ! \, B : = 2 A (A + B)^{-1} B.$$
\end{definition}
It is apparent that $A \, ! \, B =B \, ! \, A$,  $(A \, ! \, B)^* =  A^*  \, ! \, B^* $
 and that $A \, ! \, B$ is accretive too. 
If  $A + A^*$ is invertible, then 
$$ A \nabla A^* - A \, ! \, A^* =  \frac{1}{2} ( A - A^* )(A + A^* )^{-1}(A-A^*),  $$
which has appeared first in \cite{U-2019} (cf. \cite{U-2023, U-2024}). 
This formula implies that if  $\Re{A}$ is positive definite,  
\begin{equation}\label{eq:9}
0\leqq A \nabla A^* \leqq  A \, !\, A^*.
\end{equation}
At first sight this seems the reverse 
of the inequality of positive operators; however if $A$ is positive, both sides reduce  
to $A$. 
  Since 
$A\,! \,A^* = (\Re A^{-1})^{-1}$, 
we realize that \eqref{eq:9} is equivalent to  
$$\Re{A} \leqq  (\Re A^{-1})^{-1},$$
which has been shown in \cite{Mathias}.


\begin{proposition}\label{Prop:5-1}
Let $A$ and $B$ be strictly accretive operators. Then 
\begin{itemize}
\item[(i)]
$0\leqq (A+A^*)\, !\, (B+B^*) \leqq (A+B)\, !\, (A^* + B^*) \leqq A \,!\, A^* + B\, !\, B^*$ \\
\item[(ii)]
$0\leqq (A+A^*)\, !\, (B+B^*) \leqq (A \,!\, B) +  (A^* \,! \, B^*) \leqq A \,!\, A^* + B\, !\, B^*$.
 \end{itemize}
\end{proposition}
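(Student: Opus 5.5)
The plan is to reduce everything to three tools: the inequality \eqref{eq:9} $\Re X\leqq X\,!\,X^*$ (i.e. Mathias' $\Re X\leqq(\Re X^{-1})^{-1}$); the ordinary harmonic–arithmetic inequality $P\,!\,Q\leqq P\nabla Q$ for positive definite $P,Q$; and a variational formula for the parallel sum $A:B:=A(A+B)^{-1}B$, so that $A\,!\,B=2\,A:B$. Strict accretivity makes $A,B,A^{-1},B^{-1}$ and all sums appearing below strictly accretive, hence invertible. So every mean is defined by Definition \ref{Def:5-1}, and $A\,!\,B=2(A^{-1}+B^{-1})^{-1}$. The left inequalities $0\leqq\cdot$ are immediate, since $(A+A^*)\,!\,(B+B^*)$ is the harmonic mean of positive definite operators.

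\textbf{Key lemma} (I would prove it first): for strictly accretive $A,B$ and every $\mathbf{x}$,
\begin{equation*}
\Re\,((A:B)\mathbf{x},\mathbf{x})=(\Re A\,\mathbf{y},\mathbf{y})+(\Re B\,\mathbf{z},\mathbf{z}),\qquad \mathbf{y}=(A+B)^{-1}B\mathbf{x},\ \mathbf{z}=\mathbf{x}-\mathbf{y}.
\end{equation*}
To see it, note that $A\mathbf{y}=(A:B)\mathbf{x}$ and $\mathbf{z}=(A+B)^{-1}A\mathbf{x}$. Since $A:B=B:A$, we also get $B\mathbf{z}=A\mathbf{y}$. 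Hence $((A:B)\mathbf{x},\mathbf{x})=(A\mathbf{y},\mathbf{y})+(B\mathbf{z},\mathbf{z})$. By the classical minimization characterization of the parallel sum of positive operators,
\[
\Re(A:B)\geqq \Re A:\Re B .
\]
Similarly, applying this to the pair $X,X^*$ should give a joint convexity statement for $X\mapsto X\,!\,X^*=X(\Re X)^{-1}X^*$. As a backup, I would use the Schur-complement block matrix $\begin{pmatrix}\Re X & X^*\\ X & X(\Re X)^{-1}X^*\end{pmatrix}\geqq0$, summed over $X=A$ and $X=B$.

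\textbf{Part (i).} For the middle inequality, put $P=2\Re A$ and $Q=2\Re B$. Then
\[
(A+A^*)\,!\,(B+B^*)=P\,!\,Q\leqq P\nabla Q=\Re(A+B)=(A+B)\nabla(A^*+B^*),
\]
and \eqref{eq:9} applied to $X=A+B$ gives $\leqq (A+B)\,!\,(A^*+B^*)$. For the right inequality, write $X\,!\,X^*=X(\Re X)^{-1}X^*$, which follows from $X^{-1}+X^{*-1}=X^{-1}(X+X^*)X^{*-1}$. What is needed is then
\[
(A+B)(\Re A+\Re B)^{-1}(A+B)^*\leqq A(\Re A)^{-1}A^*+B(\Re B)^{-1}B^*,
\]
which is the joint convexity of $(Y,P)\mapsto YP^{-1}Y^*$. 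I would prove it by adding the two positive block matrices $\begin{pmatrix}P_j & Y_j^*\\ Y_j& Y_jP_j^{-1}Y_j^*\end{pmatrix}$ and taking the Schur complement.

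\textbf{Part (ii).} Let $C=A\,!\,B$, so that $(A\,!\,B)+(A^*\,!\,B^*)=2\Re C$. The left inequality is exactly the key lemma: $2\Re C=4\Re(A:B)\geqq 4\,\Re A:\Re B=P\,!\,Q$. For the right inequality, apply \eqref{eq:9} to $C$ to get $\Re C\leqq(\Re C^{-1})^{-1}$. Since $C^{-1}=\tfrac12(A^{-1}+B^{-1})$ and $(A\,!\,A^*)^{-1}=\Re A^{-1}$, this gives
\[
2\Re C\leqq 4(\Re A^{-1}+\Re B^{-1})^{-1}=2\,(A\,!\,A^*)\,!\,(B\,!\,B^*)\leqq A\,!\,A^*+B\,!\,B^*,
\]
where the last step is harmonic $\leqq$ arithmetic for the positive definite operators $A\,!\,A^*$ and $B\,!\,B^*$. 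The main obstacle is the lower bound in (ii). Mathias' inequality points the wrong way there, so the variational identity for $\Re(A:B)$ is the essential new ingredient, and I would check its algebra, in particular $B\mathbf{z}=A\mathbf{y}$, carefully.
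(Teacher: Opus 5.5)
Your proof is correct. For (i) it is the paper's argument: harmonic $\leqq$ arithmetic for $2\Re A,\,2\Re B$, then \eqref{eq:9} at $A+B$, and the sum of the two Schur-complement block matrices for the right-hand inequality.

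The difference is in (ii), which the paper never proves directly. It substitutes $A^{-1},B^{-1}$ into (i) and inverts all three terms. Using $(X\,!\,X^*)^{-1}=\Re X^{-1}$ and $(A^{-1}+B^{-1})^{-1}=\frac12 A\,!\,B$, these become $\frac14$ of the terms in (ii).

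Your upper bound $A\,!\,B+A^*\,!\,B^*\leqq 2(A\,!\,A^*)\,!\,(B\,!\,B^*)\leqq A\,!\,A^*+B\,!\,B^*$ is exactly the inverted form of the middle chain of (i) at $A^{-1},B^{-1}$. Once unwound, that part coincides with the paper.

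The genuinely different step is the lower bound $\Re A\,!\,\Re B\leqq\Re(A\,!\,B)$. You obtain it from the identity $\Re((A:B)\mathbf{x},\mathbf{x})=(\Re A\,\mathbf{y},\mathbf{y})+(\Re B\,\mathbf{z},\mathbf{z})$ with $\mathbf{y}+\mathbf{z}=\mathbf{x}$, plus the infimum characterization of the parallel sum of positive operators. Your algebra, including $B\mathbf{z}=A\mathbf{y}$, checks out. In the paper this bound is just the third inequality of (i) at $A^{-1},B^{-1}$, turned around by inversion.

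So your remark that Mathias' inequality ``points the wrong way'' and that a new ingredient is essential is not accurate. Operator inversion reverses the order, so the same block-matrix joint-convexity argument already yields the lower bound.

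The trade-off is this:
\begin{itemize}
\item The paper's duality trick is more economical: one proof, and (ii) comes for free. It does leave the bookkeeping of the inverted terms to the reader.
\item Your route is direct and gives a transparent pointwise reason for Lin's inequality. The price is importing the variational formula for the parallel sum.
\end{itemize}

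One minor slip: the identity $X^{-1}+X^{*-1}=X^{-1}(X+X^*)X^{*-1}$ yields $X\,!\,X^*=X^*(\Re X)^{-1}X$, not the form you wrote. The form $X(\Re X)^{-1}X^*$ that you use comes straight from Definition~\ref{Def:5-1}. The two agree because $X\,!\,X^*=X^*\,!\,X$, so nothing breaks.
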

(The third inequality of (i) and the second one of (ii) are essentially due to \cite{Mathias} and 
\cite{Lin}, respectively, also see \cite{RMF}.)
\begin{proof}
(i).  Since $A + A^* \geqq 0$ and $B + B^* \geqq 0$, 
we have  
 $$(A+A^*)\, !\, (B+B^*) \leqq  (A+A^*)\,\nabla \, (B+B^*)=  (A+B)\nabla\, (A^* + B^*)
 \leqq (A+B)\, !\, (A^* + B^*),$$
where the last inequality follows from \eqref{eq:9}. 
This shows the second inequality. 
 Since $A\,!\, A^* = A^* (A\nabla A^*)^{-1} A$, we have  
$$\begin{pmatrix}A\nabla A^* & A\\ A^* & A\, !\,  A^* \end{pmatrix} \geqq 0, \quad 
\begin{pmatrix}B\nabla B^* & B\\ B^* & B\, !\,  B^* \end{pmatrix} \geqq 0,$$
and hence 
$$ \begin{pmatrix}(A + B)\nabla (A^* +B^*)  & A + B \\ A^*+ B^*  & A\, !\,  A^* + 
B \, !\,  B^* 
\end{pmatrix} \geqq 0,$$ 
 which entails the third inequality.\\
(ii)
Replace $A$ to $A^{-1}$ and $B$ to $B^{-1}$ in (i) and take the inverse of 
each side to get (ii). 
\end{proof} 

In \cite{Drury},  Drury has defined the geometric mean $A\# B$ of matrices $A$ and 
$B$ with positive definite real parts and proved that
$$A\# B = A^{1/2} (A^{-1/2} B A^{-1/2})^{1/2} A^{1/2}$$
and that the real part of $A\# B$ remains positive definite: furthermore, he showed  
$$H A^{-1} H = B$$
for $H$ with positive definite real part if and only if $H= A\# B$. 
Subsequently, general mean $A\sigma_f B$ was defined in \cite{BKS1}, where 
$\sigma_f$ is an operator mean introduced by Kubo-Ando \cite{K-A},  
(see \cite{Lin, RMF} about related topics).\par 
We here extend this definition to accretive operators without assuming 
their real parts are positive definite. To do it we need to add 
the condition $f(1)=1$, namely we assume 
\begin{center}
\it $f(t)>0$ is an operator monotone function given by \eqref{eq:1} and $f(1)=1$.
\end{center}
\rm
Let $A$ and $B$ be accretive and invetible operators. Then 
$A^{-1/2}$ is well-defined as mentioned before.
\begin{definition}\label{Def:5-2}
If 
\begin{equation}\label{eq:10}
{\it Sp}(BA^{-1}) \cap (-\infty, 0] = \emptyset, 
\end{equation}
then 
\begin{equation}\label{eq:11}
A\sigma_f B:= A^{1/2} f( A^{-1/2} B A^{-1/2}) A^{1/2}.
\end{equation}
\end{definition}
From Proposition\,\ref{Prop:2-2} and Example\,\ref{Ex:2-1}
it follows that 
\begin{equation}\label{eq:12}
\begin{split}
&A\sigma_f B = f(0) A  + b B + \int_0^{\infty} 
\frac{1}{\lambda } A ( \lambda A + B)^{-1} B d\mu(\lambda).\\
&A\#_r B =A^{1/2} ( A^{-1/2} B A^{-1/2})^r A^{1/2}= 
 \frac{\sin r \pi}{\pi} \int_0^{\infty} 
 \frac{1}{\lambda}A (\lambda A + B)^{-1}B\lambda ^r d\lambda,
\end{split}
\end{equation}
where $\#_r$ stands for the mean associated with $t^r\,(0<r<1)$ and  
$\#: = \#_{1/2} $ is called the geometric mean. \par
Let $A, B$ be strictly accretive operators. 
Then, since $B+ \lambda A$ is invertible for $\lambda >0$, the condition 
\eqref{eq:10}
is fulfilled. So, $A\sigma_f B$ is defined. \par
Let us furnish an interesting example. 
\begin{example}
$$ A:=\begin{pmatrix} 0 & i \\ i & 1\end{pmatrix},\quad 
B:=\begin{pmatrix} 1 & - i \\ - i & 0 \end{pmatrix}. $$
These are invertible and accretive, but neither of them have 
positive definite real parts. By \eqref{eq:12}
\begin{align*}
A \# B &= \frac{1}{\pi} \int_0^{\infty} 
A ( \lambda A + B)^{-1} B \lambda ^{-1/2} d\lambda \\
 &= \frac{1}{\pi}\int_0^{\infty}
\frac{1}{\sqrt{\lambda}(\lambda^2 - \lambda + 1)}
\begin{pmatrix}\lambda & -i(\lambda -1)\\ -i(\lambda -1)& 1 \end{pmatrix} 
d\lambda \\
&=\frac{1}{\pi} \begin{pmatrix} \pi & 0 \\ 0& \pi \end{pmatrix} =  
\begin{pmatrix} 1 & 0 \\ 0& 1 \end{pmatrix}
\end{align*}
\end{example}

\begin{lemma}\label{Lem:5-0} Let $A$ and $B$ fulfills the 
condition \eqref{eq:10}. Then 
\begin{itemize}
\item[(i)]
 $A\sigma_f B$ is accretive.
\item[(ii)] $(A\#_r B)^{-1}= A^{-1} \#_r B^{-1}$ if $A$ and $B$ are invertible.
\item[(iii)]   For invertible $C$, 
$C^* (A\sigma_f B) C = (C^*A C)\sigma_f (C^* B C)$.
\item[(iv)]
$(A\sigma_f B)^* = (A^* \sigma_f B^*)$.
\end{itemize}
\end{lemma}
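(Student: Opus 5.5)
The approach is to work throughout with the integral representation \eqref{eq:12},
$$A\sigma_f B = f(0) A + b B + \int_0^{\infty} \frac{1}{\lambda} A(\lambda A + B)^{-1} B \, d\mu(\lambda),$$
rather than with the definition \eqref{eq:11}. The representation makes sense once \eqref{eq:10} holds. Indeed, $\lambda A + B = (\lambda I + BA^{-1})A$ is invertible for every $\lambda > 0$, and convergence of the integral is inherited from Proposition~\ref{Prop:2-2} applied to $T = A^{-1/2} B A^{-1/2}$. The spectrum of $T$ coincides with that of $BA^{-1}$ by similarity, since $BA^{-1} = A^{1/2} T A^{-1/2}$.

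\emph{Part (i).} Pointwise accretivity of the integrand comes from a direct computation. Assuming $B$ invertible, I rewrite
$$\frac{1}{\lambda} A(\lambda A + B)^{-1} B = \frac{1}{\lambda}\bigl(B^{-1}(\lambda A + B) A^{-1}\bigr)^{-1} = \frac{1}{\lambda}\bigl(\lambda B^{-1} + A^{-1}\bigr)^{-1}.$$
The operators $A^{-1}$ and $B^{-1}$ are accretive, because $(A^{-1}\mathbf{x}, \mathbf{x}) = \overline{(A\mathbf{y}, \mathbf{y})}$ with $\mathbf{y} = A^{-1}\mathbf{x}$. Hence $\lambda B^{-1} + A^{-1}$ is accretive and invertible, and so is its inverse by the same computation. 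Integrating against the positive measure $\mu$, and adding $f(0)A$ and $bB$ (both accretive since $f(0), b \geq 0$), gives (i) in the weak sense.

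The main obstacle is that $B$ need not be invertible, since the definition only demands invertibility of $A$. I would handle this with a sandwich computation instead. Put $\mathbf{y} = (\lambda A + B)^{-1} B\mathbf{x}$. Then $B\mathbf{x} = \lambda A\mathbf{y} + B\mathbf{y}$, that is, $B(\mathbf{x} - \mathbf{y}) = \lambda A\mathbf{y}$, and therefore
$$(A\mathbf{y}, \mathbf{x}) = (A\mathbf{y}, \mathbf{y}) + (A\mathbf{y}, \mathbf{x} - \mathbf{y}) = (A\mathbf{y}, \mathbf{y}) + \frac{1}{\lambda}\bigl(B(\mathbf{x} - \mathbf{y}), \mathbf{x} - \mathbf{y}\bigr).$$
Both terms have nonnegative real part. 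Alternatively one can perturb, replacing $B$ by $B + \epsilon I$, and pass to the limit.

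\emph{Part (ii).} When $A$ and $B$ are both invertible, the integrand identity $A(\lambda A + B)^{-1} B = (\lambda B^{-1} + A^{-1})^{-1}$ turns \eqref{eq:12} for $A^{-1} \#_r B^{-1}$ into an integral of $(\lambda A + B)^{-1}$-type terms. I expect it is cleaner to argue via the definition. Set $T = A^{-1/2} B A^{-1/2}$. Then
$$(A\#_r B)^{-1} = A^{-1/2} (T^r)^{-1} A^{-1/2} = A^{-1/2} (T^{-1})^r A^{-1/2},$$
using $(T^{-1})^r = (T^r)^{-1}$ from the Preliminaries, and $T^{-1} = A^{1/2} B^{-1} A^{1/2}$. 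What remains is the identity
$$A^{-1/2} (A^{1/2} B^{-1} A^{1/2})^r A^{-1/2} = (A^{-1})^{1/2} \bigl((A^{-1})^{-1/2} B^{-1} (A^{-1})^{-1/2}\bigr)^r (A^{-1})^{1/2}.$$
Since $(A^{-1})^{1/2} = A^{-1/2}$, this is literally the definition of $A^{-1} \#_r B^{-1}$. Condition \eqref{eq:10} for the pair $(A^{-1}, B^{-1})$ also holds, because $B^{-1}A = (A^{-1}B)^{-1}$ and $A^{-1}B$ is similar to $BA^{-1}$.

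\emph{Parts (iii) and (iv).} Part (iii) is immediate from \eqref{eq:12}: $C^*AC$ and $C^*BC$ are accretive, and
$$(\lambda C^*AC + C^*BC)^{-1} = C^{-1}(\lambda A + B)^{-1}(C^*)^{-1}.$$
Each integrand therefore transforms to $C^* \bigl(\tfrac{1}{\lambda} A(\lambda A + B)^{-1} B\bigr) C$. Condition \eqref{eq:10} is preserved because $(C^*BC)(C^*AC)^{-1} = C^* (BA^{-1}) (C^*)^{-1}$. Part (iv) follows by taking adjoints in \eqref{eq:12}:
$$\bigl(A(\lambda A + B)^{-1} B\bigr)^* = B^*(\lambda A^* + B^*)^{-1} A^*.$$
This equals $A^*(\lambda A^* + B^*)^{-1} B^*$ by the symmetric identity
$$A(\lambda A + B)^{-1} B = B(\lambda A + B)^{-1} A,$$
which holds because both sides equal $\tfrac{1}{\lambda}\bigl(B - B(\lambda A + B)^{-1} B\bigr)$. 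Finally, \eqref{eq:10} for $(A^*, B^*)$ holds since $B^*(A^*)^{-1} = \bigl(A^{-1}B\bigr)^*$, and $A^{-1}B$ is similar to $BA^{-1}$.
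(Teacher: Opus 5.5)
Your proof is correct and follows the paper's route. Parts (i), (iii) and (iv) come from the integral representation \eqref{eq:12}, and (ii) from the definition \eqref{eq:11}. Your computation also supplies the accretivity of the integrand $\frac{1}{\lambda}A(\lambda A+B)^{-1}B=\frac{1}{2\lambda^{2}}\,(\lambda A)\,!\,B$, which the paper simply treats as evident for harmonic means.

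The obstacle you anticipate does not arise. Since $0\in(-\infty,0]$, condition \eqref{eq:10} forces $BA^{-1}$ to be invertible, hence $B=(BA^{-1})A$ is invertible, so your first argument for (i) already suffices.
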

\begin{proof}
(i). $2A(\lambda A + B)^{-1} B=\frac{1}{\lambda} (\lambda A \,!\, B)$ evidently shows (i). It is easy to see the rest by using \eqref{eq:11} and \eqref{eq:12}.
\end{proof}
 Since $f(0)\geqq 0$, by \eqref{eq:1}, 
$h(t): = t f(1/t)$ is operator monotone on $(0,  \infty)$ as well. 
\begin{lemma}\label{Le:5-1}
{\rm (i)}.\, $A\sigma_h B = B\sigma_f A$; in particular,
$A\#_r B = B\#_{1-r} A$.\\
{\rm (ii)}.\, If $f(t)= t f(1/t)$, then $A\sigma_f B = B\sigma_f A$
\end{lemma}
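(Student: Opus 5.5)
The plan is to work with the integral representation \eqref{eq:12} rather than the defining formula \eqref{eq:11}. That way no square roots of $A$ or $B$ need to be manipulated, and $A$ and $B$ need not be invertible.

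First I record the representation of $h$. Put $h(t)=tf(1/t)$ and insert \eqref{eq:1}:
\begin{align*}
h(t)&= f(0)t + b + \int_0^{\infty}\Bigl(\frac{t}{\lambda}-\frac{t^2}{\lambda t+1}\Bigr)d\mu(\lambda)
= f(0)t + b + \int_0^{\infty}\frac{t}{\lambda(\lambda t+1)}\,d\mu(\lambda).
\end{align*}
Next I substitute $\lambda=1/\nu$ and let $\tilde\mu$ be the image measure of $\mu$. This gives
\[
h(t)=b+f(0)t+\int_0^\infty \frac{\nu t}{t+\nu}\,d\tilde\mu(\nu)=b+f(0)t+\int_0^\infty\Bigl(\frac1\nu-\frac{1}{\nu+t}\Bigr)\nu^2\,d\tilde\mu(\nu).
\]
So $h$ has the form \eqref{eq:1} with $h(0)=b$, slope $f(0)$, and measure $\nu^2 d\tilde\mu(\nu)$. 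The integrability conditions in \eqref{eq:1} carry over from those of $\mu$, which is exactly why $h$ is operator monotone. The condition $h(1)=f(1)=1$ also holds.

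Now I apply \eqref{eq:12} to $h$. This requires \eqref{eq:10} for the pair $(A,B)$. The same condition for the reversed pair $(B,A)$ follows because $\mathit{Sp}(AB^{-1})=\mathit{Sp}((BA^{-1})^{-1})$ is the set of reciprocals of $\mathit{Sp}(BA^{-1})$, and reciprocals preserve the property of avoiding $(-\infty,0]$. The formula reads
\[
A\sigma_h B= bA+f(0)B+\int_0^\infty \nu A(\nu A+B)^{-1}B\,d\tilde\mu(\nu).
\]
I then write $\nu=1/\lambda$ and use
\[
\nu A(\nu A+B)^{-1}B=A(A+\lambda B)^{-1}B=\tfrac{1}{\lambda}\,B(\lambda B+A)^{-1}A.
\]
The identity $A(A+\lambda B)^{-1}B=B(A+\lambda B)^{-1}A$ is the same computation as $A(A+B)^{-1}B=B(A+B)^{-1}A$ in Definition \ref{Def:5-1}: both sides equal $A-A(A+\lambda B)^{-1}A$. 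The integral therefore becomes $\int_0^\infty \frac1\lambda B(\lambda B+A)^{-1}A\,d\mu(\lambda)$. The whole expression is then precisely \eqref{eq:12} for $B\sigma_f A$, since the $f(0)$ term attaches to $B$ and the $b$ term attaches to $A$. This proves (i).

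For $f(t)=t^r$ we have $h(t)=t^{1-r}$, so $A\#_rB=B\#_{1-r}A$. Part (ii) is immediate: if $h=f$, then (i) gives $A\sigma_fB=A\sigma_hB=B\sigma_fA$.

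The main obstacle is the change of measure, where one must check that $\nu^2d\tilde\mu$ satisfies the integrability conditions of \eqref{eq:1}. One also has to check the ranges near $0$ and $\infty$: $\int_0^1\frac1\lambda d\mu<\infty$ becomes $\int_1^\infty \frac{\nu^2}{1+\nu^2}\cdot\frac{d\tilde\mu}{\nu}$-type bounds. All of these reduce to $\int \frac{d\mu}{\lambda(1+\lambda)}<\infty$, which follows from \eqref{eq:1}.
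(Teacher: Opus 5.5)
Your route is valid and differs from the paper's: you match the integral representations \eqref{eq:12} of $A\sigma_hB$ and $B\sigma_fA$ after $\lambda\mapsto1/\lambda$. As written, though, it contains two algebra errors that happen to cancel.

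\textbf{First error: the measure of $h$.} With $\lambda=1/\nu$ one has $\lambda(\lambda t+1)=(t+\nu)/\nu^2$, so
\[
\frac{t}{\lambda(\lambda t+1)}=\frac{t\nu^2}{t+\nu}=\Bigl(\frac1\nu-\frac1{\nu+t}\Bigr)\nu^3 .
\]
The representing measure of $h$ is therefore $\nu^3\,d\tilde\mu(\nu)$, not $\nu^2\,d\tilde\mu(\nu)$.

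You can check this with $f(t)=t^r$. There $d\tilde\mu=\frac{\sin r\pi}{\pi}\nu^{-r-2}d\nu$, and only $\nu^3d\tilde\mu$ reproduces the measure $\frac{\sin r\pi}{\pi}\nu^{1-r}d\nu$ of $t^{1-r}$. Your $\nu^2d\tilde\mu$ even violates \eqref{eq:1}: $\int_0^1\nu\,d\tilde\mu(\nu)=\int_1^\infty\lambda^{-1}d\mu(\lambda)$ diverges in that example. So your closing integrability check cannot succeed for the measure you wrote.

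For $\nu^3d\tilde\mu$ the two conditions become $\int_0^\infty\frac{d\mu(\lambda)}{\lambda(1+\lambda^2)}<\infty$ and $\int_1^\infty\lambda^{-2}d\mu(\lambda)<\infty$. Both follow at once from \eqref{eq:1}. Consequently the integrand in $A\sigma_hB$ is $\nu^2A(\nu A+B)^{-1}B$, not $\nu A(\nu A+B)^{-1}B$.

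\textbf{Second error: the resolvent identity.} The identity $A(A+\lambda B)^{-1}B=\frac1\lambda B(\lambda B+A)^{-1}A$ is false for $\lambda\neq1$. Your own argument gives $A(A+\lambda B)^{-1}B=B(\lambda B+A)^{-1}A$, with both sides equal to $\frac1\lambda\bigl(A-A(A+\lambda B)^{-1}A\bigr)$, not $A-A(A+\lambda B)^{-1}A$.

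With both corrections,
\[
\nu^2A(\nu A+B)^{-1}B=\tfrac1\lambda A(A+\lambda B)^{-1}B=\tfrac1\lambda B(\lambda B+A)^{-1}A ,
\]
and the term-by-term match with \eqref{eq:12} for $B\sigma_fA$ goes through. Parts $(ii)$ and the $\#_r$ case are then fine as you wrote them.

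\textbf{Comparison with the paper.} The paper works directly from \eqref{eq:11}. It uses $h(X)=Xf(X^{-1})$ for $X=A^{-1/2}BA^{-1/2}$. Then the similarity rule $Sf(Y)S^{-1}=f(SYS^{-1})$ of the Riesz--Dunford calculus turns $BA^{-1/2}f(A^{1/2}B^{-1}A^{1/2})A^{1/2}$ into $Bf(B^{-1}A)$, and then into $B^{1/2}f(B^{-1/2}AB^{-1/2})B^{1/2}$. That is shorter and needs no measure bookkeeping. It does lean on functional-calculus rules (composition with $z\mapsto1/z$, similarity invariance) that the paper only cites.

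Your version uses nothing beyond \eqref{eq:12} and the resolvent identity, and it makes the symmetry visible at the level of the integrand. The price is the change of measure, which is exactly where the slips occurred.

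Your remark that $A,B$ need not be invertible has no content inside the paper. Condition \eqref{eq:10} already presupposes $A^{-1}$ and forces $B=(BA^{-1})A$ to be invertible. The remark would only matter if \eqref{eq:12} were adopted as the definition of $A\sigma_fB$.
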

\begin{proof}
\begin{align*}
A\sigma_h B & = A^{1/2}  A^{-1/2} B A^{-1/2} f( A^{1/2} B^{-1} A^{1/2}) A^{1/2}
= B A^{-1/2} f( A^{1/2} B^{-1} A^{1/2}) A^{1/2}\\
&= B f(B^{-1} A)= B^{1/2}B^{1/2}
f(B^{-1}A ) B^{-1/2} B^{1/2}= B^{1/2} f(B^{-1/2 }A B^{-1/2}) B^{1/2} \\
&=B\sigma_f A.
\end{align*}
The rest is evident. 
\end{proof}
Recall that $\sigma _f$ is called a {\it symmetric mean}  if 
$f(t)= t f(1/t)$ and $f(1)=1$: then $ f(0) = b$ arises too.

\begin{proposition}\label{Prop:5-3} 
Let $A$ and $B$ be accretive and invertible operators satisfying
${\it Sp}(BA^{-1}) \cap (-\infty, 0] = \emptyset$. Then\\
{\rm (i)}. $A\# B \in \mathcal{S}_{1}$ and 
\begin{center} $(A\# B) A^{-1} (A \# B) =B.$\end{center}
Conversely, if  $X \in \mathcal{S}_{r} \,(0<r<1)$  satisfies  $X A^{-1} X = B$,
 then $X = A\# B$.\\
{\rm (ii)}.\begin{center}
$A(A\# B)^{-1} B= B(A\# B)^{-1} A = A\# B.$
\end{center}
Conversely, if  $X \in \mathcal{S}_{r} \,(0<r<1)$  satisfies
 $A X^{-1} B= X$, then $X= A\# B$. 

\end{proposition}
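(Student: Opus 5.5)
Put $C := A^{-1/2} B A^{-1/2}$. This operator is similar to $BA^{-1}$, since $BA^{-1} = A^{1/2} C A^{-1/2}$, so the two have the same spectrum and ${\it Sp}(C)\cap(-\infty,0]=\emptyset$. Hence $C^{1/2}$ is defined by the Riesz--Dunford integral, and by definition $A\# B = A^{1/2} C^{1/2} A^{1/2}$.

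\emph{The identity in (i).} Using $(C^{1/2})^2=C$ we compute
$$(A\# B)A^{-1}(A\# B) = A^{1/2}C^{1/2}A^{1/2}A^{-1}A^{1/2}C^{1/2}A^{1/2} = A^{1/2} C A^{1/2} = B.$$

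\emph{Membership $A\# B\in\mathcal{S}_1$.} This is exactly accretivity, which is Lemma \ref{Lem:5-0}(i). I would also like $A\# B$ to be invertible. This holds because $C^{1/2}$ is invertible, as $0\notin{\it Sp}(C^{1/2})$ by the spectral mapping theorem.

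\emph{The identity in (ii).} Starting from $(A\# B)A^{-1}(A\# B)=B$ and multiplying on the left by $A(A\# B)^{-1}$, we get $A(A\# B)^{-1}B = A\# B$. For the other expression, $B(A\# B)^{-1}A = (A\# B)A^{-1}(A\# B)(A\# B)^{-1}A = A\# B$. Conversely, $AX^{-1}B=X$ gives $B = X A^{-1} X$, so both converse statements reduce to the converse in (i).

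\emph{The converse in (i).} This is the main step. Suppose $X\in\mathcal{S}_r$ with $0<r<1$ and $XA^{-1}X=B$; $X$ is invertible because $B$ is. Put $Y:=A^{-1/2}XA^{-1/2}$. Then $Y^2 = A^{-1/2}XA^{-1}XA^{-1/2} = C$. We want to compare $Y$ with $C^{1/2}$ through Proposition \ref{Prop:4-1}(i) with $n=2$, and for that we need:
\begin{itemize}
\item[(a)] $Y$ commutes with $C^{1/2}$;
\item[(b)] $Y$ and $C^{1/2}$ lie in suitable classes $\mathcal{S}_r$, $\mathcal{S}_1$.
\end{itemize}
Point (a) is fine: $Y$ commutes with $Y^2=C$, hence with every Riesz--Dunford function of $C$, in particular with $C^{1/2}$.

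Point (b) is the obstacle. The congruence by $A^{-1/2}$, which is not self-adjoint, does not preserve numerical-range sectors, so $Y\in\mathcal{S}_r$ does not follow directly. I would therefore avoid the congruence and work with $BA^{-1}$ instead.

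Let $Z:=XA^{-1}$. Then $Z^2 = XA^{-1}XA^{-1} = BA^{-1}$, so $Z$ commutes with $BA^{-1}$ and hence with $(BA^{-1})^{1/2}$. Also $(BA^{-1})^{1/2} = A^{1/2}C^{1/2}A^{-1/2} = (A\# B)A^{-1}$. So it suffices to show $Z=(BA^{-1})^{1/2}$, a uniqueness statement for square roots of $BA^{-1}$ that commute with it.

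I would argue spectrally, assuming the paper's intended argument is that both roots have spectrum in the open right half-plane. Since $Z$ and $W:=(BA^{-1})^{1/2}$ commute,
$$0 = Z^2 - W^2 = (Z+W)(Z-W).$$
If ${\it Sp}(Z)$ and ${\it Sp}(W)$ both lie in $\{\Re z>0\}$, then for commuting operators ${\it Sp}(Z+W)\subseteq{\it Sp}(Z)+{\it Sp}(W)$, which avoids $0$. So $Z+W$ is invertible and $Z=W$.
\begin{itemize}
\item For $W$, this holds because ${\it Sp}(W)$ lies in the open right half-plane, by the spectral mapping theorem applied to $BA^{-1}$.
\item For $Z$, the claim is not automatic. ${\it Sp}(XA^{-1})$ lies in the set of quotients of points of $\overline{\mathcal{W}(X)}$ by points of $\overline{\mathcal{W}(A)}$, a sector of half-angle $r\pi/2+\pi/2<\pi$, but that is not the right half-plane. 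Here I would use the hypothesis $X\in\mathcal{S}_r$ with $r<1$ together with the condition on ${\it Sp}(BA^{-1})$ and the relation $Z^2=BA^{-1}$ to place ${\it Sp}(Z)$ inside the principal-root sector $\{|\arg z|<\pi/2\}$.
\end{itemize}
If this spectral localization of $Z$ fails, I would fall back on Proposition \ref{Prop:4-1}(i) applied to a suitably symmetrized pair. Either way, this localization is the step I expect to be hardest.
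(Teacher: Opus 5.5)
Your handling of the direct assertions is fine. That covers the identity $(A\# B)A^{-1}(A\# B)=B$, the membership $A\# B\in\mathcal{S}_1$ via Lemma~\ref{Lem:5-0}(i), both identities in (ii), and the reduction of the converse in (ii) to the converse in (i). However, the converse in (i), which is the real content of the proposition, is not proved. You correctly reduce it to showing that two commuting square roots coincide, via $(Z+W)(Z-W)=Z^2-W^2=0$. You then ask for \emph{invertibility} of $Z+W$, through a localization of ${\it Sp}(XA^{-1})$ in the open right half-plane that you never establish. The quotient-of-numerical-ranges bound you invoke cannot deliver it. A sector of half-angle $\pi/2+r\pi/2$ still meets the left half-plane. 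Moreover, an invertible accretive $A$ can have $0\in\overline{\mathcal{W}(A)}$ (as in the paper's $2\times 2$ example), and then even that bound is unjustified. The fallback to ``a suitably symmetrized pair'' is not an argument.

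The missing idea is that only \emph{injectivity} of the sum is needed, and injectivity, unlike sectoriality, survives the congruence that worried you in (b). Indeed $C^{1/2}=A^{-1/2}(A\# B)A^{-1/2}$, so $C^{1/2}+Y=A^{-1/2}(A\# B+X)A^{-1/2}$ (likewise $W+Z=(A\# B+X)A^{-1}$ in your second setup). Since $C^{1/2}$ and $Y$ commute, $(C^{1/2}+Y)(C^{1/2}-Y)=C-Y^2=0$. So if $C^{1/2}+Y$ has trivial kernel, then $C^{1/2}=Y$, i.e.\ $X=A\# B$. Because $A^{-1/2}$ is invertible, it suffices that $A\# B+X$ is injective, and here the sector hypotheses apply to the original operators. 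Suppose $(A\# B+X)\mathbf{y}=0$. Then $((A\# B)\mathbf{y},\mathbf{y})+(X\mathbf{y},\mathbf{y})=0$ with both real parts $\geqq 0$, so $\Re(X\mathbf{y},\mathbf{y})=0$. As $X\in\mathcal{S}_r$ with $r<1$, this forces $(X\mathbf{y},\mathbf{y})=0$, and then $X\mathbf{y}=0$. The paper gets this last step by reusing the compression argument of Proposition~\ref{Prop:4-1}. Alternatively, with $c=\tan(r\pi/2)$ the operators $c\Re(X)\pm\Im(X)$ are positive and their quadratic forms vanish at $\mathbf{y}$, so they annihilate $\mathbf{y}$. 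Since $X$ is invertible, $\mathbf{y}=0$. This is exactly the paper's proof, and it needs neither a sector condition on $Y$ nor any information about ${\it Sp}(XA^{-1})$.
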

\begin{proof}
(i). \, $A\# B = A^{1/2}(A^{-1/2} B A^{-1/2})^{1/2} A^{1/2}$
 obviously satisfies the 
required equation. If $X A^{-1} X = B$, then 
$$(A^{-1/2} X A^{-1/2})^2 = A^{-1/2} B A^{-1/2}.$$
 Since
$$(A^{-1/2} B A^{-1/2})^{1/2} = A^{-1/2} (A \# B) A^{-1/2},$$
$A^{-1/2} (A \# B) A^{-1/2}$ commutes to 
$A^{-1/2} X A^{-1/2}$. Hence 
\begin{align*}
&0 =(A^{-1/2} (A \# B) A^{-1/2})^2 - (A^{-1/2} X A^{-1/2})^2 \\
=& \{(A^{-1/2} (A \# B) A^{-1/2}) + (A^{-1/2} X A^{-1/2}) \} 
\{(A^{-1/2} (A \# B) A^{-1/2}) - (A^{-1/2} X A^{-1/2})\}.
\end{align*}
Assume $\{(A^{-1/2} (A \# B) A^{-1/2}) + (A^{-1/2} X A^{-1/2})\} {\bf x} =0$. 
Then $ (A \# B) {\bf y} + X {\bf y} = 0$ for ${\bf y}= A^{-1/2} {\bf x}$. 
We therefore obtain $\Re {((A \# B) {\bf y}, {\bf y})} = \Re{ (X {\bf y}, {\bf y})} =0$, 
which causes $((A \# B) {\bf y}, {\bf y}) =  (X {\bf y}, {\bf y}) =0$,
Since $X \in \mathcal{S}_{r}$, 
by the same reason as the proof of Proposition\,\ref{Prop:4-1}, we obtain 
$X {\bf y} =0$, which implies 
$$(A^{-1/2} B A^{-1/2})^{1/2}{\bf x} = (A^{-1/2} (A \# B) A^{-1/2}){\bf x} =0,$$
and ${\bf x}=0$, because $A$ and $B$ are invertible. We arrive at $A\# B = X$.\\
(ii). Since
$$XA^{-1}X=B \Longleftrightarrow A X ^{-1} B= X,$$
(ii) follows to (i). 
\end{proof}
 
From (ii) of Proposition\,\ref{Prop:5-3} 
it follows that
\begin{align*}
(A\# B)^{-1}& = A^{-1} (A\# B) B^{-1} = A^{-1}\frac{1}{\pi} \int_0^{\infty}\frac{1}{\sqrt{\lambda}}
A (\lambda A + B)^{-1} B d \lambda B^{-1} \\
&= \frac{2}{\pi} \int_0^{\infty} (tA + t^{-1 } B)^{-1} t^{-1} d t.
\end{align*}
We consequently got Brury's definition \eqref{eq:1-0} from our definition. 
\begin{theorem}\label{Th:5-5}
Let $A $ and $B$ be normal and accretive operators 
such that $AB=BA$ and  
${\it Sp}(BA^{-1}) \cap (-\infty, 0] = \emptyset$. Then 
$$A\# B = A^{1/2} B^{1/2}.$$
\end{theorem}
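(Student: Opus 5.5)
Since $A$ and $B$ are commuting normal operators, I will work in the joint spectral representation and reduce the claim to a pointwise identity between scalar functions. By Fuglede's theorem $A$, $A^*$, $B$, $B^*$ all commute, so there is a single spectral measure $E$ on a compact set $\Omega\subset\{(\alpha,\beta):\Re\alpha\geqq 0,\Re\beta\geqq 0\}$ with $A=\int\alpha\,dE$ and $B=\int\beta\,dE$. The hypothesis $A\# B$ defined requires $A$ invertible, so $\alpha\ne 0$ on $\Omega$. The condition ${\it Sp}(BA^{-1})\cap(-\infty,0]=\emptyset$ then says that $\beta/\alpha\notin(-\infty,0]$ on $\Omega$.

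\textbf{Step 1: functional calculus for $A^{1/2}$, $B^{1/2}$, $A^{-1/2}$.} By Theorem~\ref{Th:3-1} together with Remark~\ref{Re:2-2}(ii),(iii), $A^{1/2}=\int\alpha^{1/2}\,dE$ and $B^{1/2}=\int\beta^{1/2}\,dE$ with principal branches, and these formulas remain valid even where $\beta=0$. Likewise $A^{-1/2}=\int\alpha^{-1/2}\,dE$.

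\textbf{Step 2: rewrite $A\# B$.} Since everything commutes,
$$A^{-1/2}BA^{-1/2}=BA^{-1}=\int(\beta/\alpha)\,dE,$$
which is normal with spectrum off $(-\infty,0]$. Remark~\ref{Re:2-2}(ii) then gives $(BA^{-1})^{1/2}=\int(\beta/\alpha)^{1/2}\,dE$. Hence
$$A\# B=\int\alpha\,(\beta/\alpha)^{1/2}\,dE,$$
and it suffices to prove the scalar identity
$$\alpha\,(\beta/\alpha)^{1/2}=\alpha^{1/2}\beta^{1/2}$$
for $\Re\alpha\geqq0$, $\alpha\ne0$, $\Re\beta\geqq0$ and $\beta/\alpha\notin(-\infty,0]$.

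\textbf{Step 3: the scalar identity.} If $\beta=0$ both sides vanish. Otherwise write $\alpha=|\alpha|e^{ia}$ and $\beta=|\beta|e^{ib}$ with $a,b\in[-\pi/2,\pi/2]$. Then $b-a\in[-\pi,\pi]$, and the value $\pm\pi$ is excluded because $\beta/\alpha$ would then be negative. Hence $\arg(\beta/\alpha)=b-a$ exactly, so
$$(\beta/\alpha)^{1/2}=|\beta/\alpha|^{1/2}e^{i(b-a)/2}.$$
Multiplying by $\alpha$ gives $|\alpha|^{1/2}|\beta|^{1/2}e^{i(a+b)/2}=\alpha^{1/2}\beta^{1/2}$, as required.

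\textbf{Expected obstacle.} The main care is needed in Step 1 when $0\in{\it Sp}(B)$, where $B^{1/2}$ is defined only as the norm limit in Theorem~\ref{Th:3-1}. Remark~\ref{Re:2-2}(iii) identifies that limit with $\int\beta^{1/2}\,dE$, using the continuity of $z^{1/2}$ at $0$ from Lemma~\ref{Lem:2-1}. Once this is settled, the branch bookkeeping in Step 3 is elementary.

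Alternatively, one can avoid the spectral calculus by using Proposition~\ref{Prop:5-3}(i). Set $X=A^{1/2}B^{1/2}$. Then $XA^{-1}X=B$, because all four operators commute and $(A^{1/2})^2=A$, $(B^{1/2})^2=B$. Moreover $X$ is normal with spectrum in a closed sector of half-angle $\pi/2$. Applying the uniqueness statement of Proposition~\ref{Prop:5-3}(i), however, requires $X\in\mathcal S_r$ with $r<1$, which fails in boundary cases. For this reason the spectral route above is preferable.
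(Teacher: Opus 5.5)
Your proof is correct, but it takes a different route from the paper's. You pass to the joint spectral measure of the commuting normal pair (via Fuglede) and reduce the claim to the scalar identity $\alpha(\beta/\alpha)^{1/2}=\alpha^{1/2}\beta^{1/2}$ for principal branches.

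The paper instead stays inside its own sectorial framework, in three steps:
\begin{enumerate}
\item From the Cartesian decompositions $A^{-1/2}=C_1+iD_1$, $B^{1/2}=C_2+iD_2$ and $|D_1D_2|\leqq C_1C_2$, it gets $A^{-1/2}B^{1/2}\in\mathcal{S}_1$.
\item From the hypothesis on ${\it Sp}(BA^{-1})$ and Stone's theorem (the closed numerical range of a normal operator is the closed convex hull of its spectrum), it gets $(BA^{-1})^{1/2}\in\mathcal{S}_r$ for some $r<1$.
\item Both operators commute and square to $BA^{-1}$, so the root-uniqueness Proposition~\ref{Prop:4-1} with $n=2$ gives $A^{-1/2}B^{1/2}=(BA^{-1})^{1/2}$, whence $A\#B=A^{1/2}B^{1/2}$.
\end{enumerate}
This is exactly how the paper avoids the boundary-case obstruction you noticed in your alternative. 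The strict condition $r<1$ is placed on $(BA^{-1})^{1/2}$, where the spectral hypothesis supplies it, while the candidate $A^{-1/2}B^{1/2}$ only has to be accretive.

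Your version makes the role of the hypothesis transparent: it is precisely the exclusion of $b-a=\pm\pi$. It leans, however, on the joint spectral theorem and on identifying the Riesz--Dunford calculus with the Borel calculus (Remark~\ref{Re:2-2}(ii)). The paper's version reuses its own uniqueness machinery, though it too tacitly needs Fuglede for $C_1,D_1,C_2,D_2$ to commute.

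One simplification: your ``expected obstacle'' cannot occur. Since $0\notin{\it Sp}(BA^{-1})$ and $A$ is invertible, $B=(BA^{-1})A$ is invertible, and Definition~\ref{Def:5-2} assumes this anyway. Hence $\beta\neq 0$ on the joint spectrum, and neither Theorem~\ref{Th:3-1} nor Remark~\ref{Re:2-2}(iii) is needed.
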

\begin{proof}  $A^{-1/2}$ and $B^{1/2}$ are both normal and in 
$\mathcal{S}_{1/2}$, and they are commutative. Considering the Cartesian 
decomposition       
 $A^{-1/2} = C_1 + i D_1$ and $B^{1/2} = C_2 + i D_2$, $C_i, \,D_i \,(i=1,2)$ are commutative each other. Since 
$|(D_1 {\bf x}, {\bf x})|\leqq (C_1{\bf x} , {\bf x})$ for every ${\bf x}$, 
we get $|D_1|\leqq C_1$,
 and $|D_2|\leqq C_2$ as well.  Consequently $|D_1  D_2|\leqq C_1 C_2$ arises. 
This implies $A^{-1/2} B^{1/2} \in \mathcal{S}_{1}$. From the hypothesis it follows that 
$${\it Sp}((BA^{-1})^{1/2}) = ( {\it Sp}(BA^{-1}) )^{1/2} \subseteqq \{z: |\arg z| < \pi /2\}.$$ 
By Stone's theorem \cite{Stone} the closure of the numerical range of $(BA^{-1})^{1/2}$ is the closed 
convex hull of ${\it Sp}((BA^{-1})^{1/2})$, there is $0<r<1$ so that
 $(BA^{-1})^{1/2}  \in \mathcal{S}_{r}$. Since 
$(A^{-1/2} B^{1/2})^2 = B A^{-1} =( (BA^{-1})^{1/2})^2$, by Proposition\,\ref{Prop:4-1}
 $A^{-1/2} B^{1/2} = (BA^{-1})^{1/2}$. We finally get 
$A\# B = A^{1/2}(A^{-1/2} B A^{-1/2})^{1/2} A^{1/2} = A^{1/2} B^{1/2}$. 
\end{proof}

\section{self and adjoint means}
Let us consider a symmetric mean $\sigma_f$ which is defined by $f(t)$ satisfying 
\begin{center}
$f(t)= t f(1/t), \,\,f(0)=b \geqq 0, \,\, f(1)=1$.
\end{center}  
\begin{theorem}\label{Th:6-1}
Let $A$ be an accretive operator with positive definite real part and 
$\sigma _f$ a symmetric mean. Then 
$A \sigma _f  A^*$ is positive definite. More precisely 
\begin{equation*}
0\leqq A\nabla A^* \leqq A \sigma _f  A^* \leqq A \,!\, A^*.
\end{equation*}
In particular, 
$$
0\leqq A\nabla A^* \leqq A \# A^* \leqq A \,!\, A^*.$$
\end{theorem}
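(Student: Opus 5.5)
We work from the integral representation \eqref{eq:12} with $B=A^*$:
$$A\sigma_f A^* = bA + bA^* + \int_0^\infty \frac{1}{\lambda} A(\lambda A + A^*)^{-1}A^*\,d\mu(\lambda),$$
using $f(0)=b$. The plan is to bound each term from below by its "arithmetic" counterpart and from above by its "harmonic" counterpart, then use $f(1)=1$ to reassemble the bounds. Since $\Re A$ is positive definite, $\lambda A+A^*$ is strictly accretive for every $\lambda>0$ and hence invertible, so \eqref{eq:10} holds and the definition applies.

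\textbf{Selfadjointness.} Lemma \ref{Lem:5-0}(iv) gives $(A\sigma_f A^*)^* = A^*\sigma_f A$. By Lemma \ref{Le:5-1}(ii) with the symmetry $f(t)=tf(1/t)$, this equals $A\sigma_f A^*$. So $A\sigma_f A^*$ is selfadjoint, and being accretive by Lemma \ref{Lem:5-0}(i), it is positive.

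\textbf{The integrand.} Write $\frac{1}{\lambda}A(\lambda A+A^*)^{-1}A^* = \frac{1}{2\lambda}\,(\lambda A)\,!\,A^*$. We want two-sided bounds by $\frac{1}{2\lambda}\big((\lambda A)\nabla A^*\big)$ and by the harmonic mean. The natural route is to reduce to the key inequality \eqref{eq:9}, $C\nabla C^*\le C\,!\,C^*$, by rescaling. Put $C=\sqrt{\lambda}A$, so that
$$(\lambda A)\,!\,A^* = \tfrac{1}{\sqrt\lambda}\,\bigl(C\,!\,C^*\bigr)\cdot\sqrt\lambda\ \text{-type rescaling}.$$
Concretely, $2\lambda A(\lambda A+A^*)^{-1}A^*$ and $2\sqrt\lambda\, C(C+C^*)^{-1}C^*\cdot\sqrt\lambda/\lambda$ need to be matched carefully. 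This bookkeeping is exactly where I expect the difficulty: the scaling is not symmetric in $A$ and $A^*$ unless we symmetrize, and an individual integrand need not be selfadjoint.

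\textbf{Symmetrizing.} The remedy is to pair $\lambda$ with $1/\lambda$. Symmetry of $f$ should make $d\mu$ invariant under $\lambda\mapsto 1/\lambda$ up to the weight $\lambda$, i.e.\ $d\mu(1/\lambda)\cdot\lambda$-type invariance; I would check this from $f(t)=tf(1/t)$ applied to \eqref{eq:1}. Granting it, the integrand can be replaced by its selfadjoint symmetrization
$$\frac{1}{2\lambda}\bigl[A(\lambda A+A^*)^{-1}A^* + A^*(\lambda A^*+A)^{-1}A\bigr].$$
Equivalently, take real parts directly, since the total is selfadjoint. Now compare with \eqref{eq:9} applied to $\lambda$-dependent operators. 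Proposition \ref{Prop:5-1}-type block matrix arguments should give the lower bound $\Re$ of the integrand $\ge \frac{1}{\lambda}\cdot\frac{\lambda}{\lambda+1}\,\Re A$ and the upper bound $\le \frac{1}{\lambda}\cdot\frac{\lambda}{\lambda+1}\,A\,!\,A^*$. Here $\frac{\lambda}{\lambda+1}$ is the scalar value $\frac{\lambda t}{\lambda t+1}$ at $t=1$, i.e.\ the value of $\frac1\lambda-\frac1{\lambda+1}$.

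\textbf{Reassembling.} Integrating these bounds and using $f(1)=2b+\int(\frac1\lambda-\frac1{\lambda+1})\,d\mu=1$ gives
$$A\nabla A^* = (2b+\textstyle\int\cdots)\,\Re A \;\le\; A\sigma_f A^* \;\le\; (2b+\textstyle\int\cdots)\,A\,!\,A^* = A\,!\,A^*,$$
using $bA+bA^*=2b\,\Re A \le 2b\,A\,!\,A^*$ by \eqref{eq:9}.

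\textbf{Alternative route.} As a fallback, if the pointwise bounds fail, I would use operator convexity/concavity with the identities $A\,!\,A^* = (\Re A^{-1})^{-1}$ and $A\sigma_f A^* = (A^{-1}\sigma_f A^{-*})^{-1}$ (Lemma \ref{Lem:5-0}(ii) for $\#$). The argument then reduces the upper bound to the lower bound applied to $A^{-1}$: $\Re(A^{-1})\le A^{-1}\sigma_f A^{-*}$. This makes the lower bound the only genuine obstacle. For $\#$, the case $f(t)=\sqrt t$ gives $b=0$ and $d\mu=\frac1\pi\lambda^{1/2}d\lambda$, which is symmetric under $\lambda\mapsto1/\lambda$ in the required sense.
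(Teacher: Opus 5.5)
Your skeleton matches the paper's: expand $A\sigma_f A^*$ via \eqref{eq:12}, pass to real parts of the integrand, bound pointwise, and reassemble with $f(1)=2b+\int_0^\infty(\frac1\lambda-\frac1{\lambda+1})\,d\mu(\lambda)=1$. Passing to real parts is legitimate because the whole integral is selfadjoint. It is exactly the paper's ``symmetrization'', since $((\lambda A)\,!\,A^*)^*=(\lambda A^*)\,!\,A$, so the $\lambda\mapsto1/\lambda$ pairing of $\mu$ is unnecessary. The gap is where all the content lies. The two pointwise inequalities are only asserted (``should give''), and with the wrong scalar. The integrand is $\frac1\lambda A(\lambda A+A^*)^{-1}A^*=\frac{1}{2\lambda^2}(\lambda A)\,!\,A^*$, not $\frac{1}{2\lambda}(\lambda A)\,!\,A^*$. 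So the bounds you need are $\frac{1}{\lambda(\lambda+1)}\Re A$ and $\frac{1}{\lambda(\lambda+1)}A\,!\,A^*$. Your coefficient $\frac1\lambda\cdot\frac{\lambda}{\lambda+1}=\frac{1}{\lambda+1}$ is not $\frac1\lambda-\frac1{\lambda+1}$. It is not even $\mu$-integrable in general: for $\#$, $d\mu=\frac1\pi\lambda^{1/2}d\lambda$ and $\int^\infty\frac{\lambda^{1/2}}{\lambda+1}d\lambda=\infty$.

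For the lower bound, apply Proposition~\ref{Prop:5-1}(ii) to the strictly accretive pair $\lambda A$, $A^*$:
\[
(\lambda A)\,!\,A^*+(\lambda A^*)\,!\,A\ \geqq\ \bigl(\lambda(A+A^*)\bigr)\,!\,(A+A^*)=\tfrac{2\lambda}{\lambda+1}(A+A^*).
\]
This gives $\Re\bigl(\tfrac{1}{2\lambda^2}(\lambda A)\,!\,A^*\bigr)\geqq\tfrac{1}{\lambda(\lambda+1)}\Re A$. Your rescaling $C=\sqrt\lambda A$ cannot replace this, because $(\sqrt\lambda C)\,!\,(C^*/\sqrt\lambda)$ is not a multiple of $C\,!\,C^*$. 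What is needed is the convexity-type inequality of Proposition~\ref{Prop:5-1}, not a rescaled \eqref{eq:9}. For the upper bound, the congruence $X\mapsto A^{-1}X(A^{-1})^*$ turns the required inequality into $\Re(C_\lambda^{-1})\leqq(\Re C_\lambda)^{-1}$ with $C_\lambda=\frac{\lambda A+A^*}{\lambda+1}$, noting $\Re C_\lambda=\Re A$. Since $\Re(C_\lambda^{-1})=(C_\lambda\,!\,C_\lambda^*)^{-1}$, this is \eqref{eq:9} for $C_\lambda$, inverted.

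Your fallback uses $(A\sigma_fA^*)^{-1}=A^{-1}\sigma_f(A^*)^{-1}$, which is false for a general symmetric $f$. The correct rule uses $f^*(t)=1/f(1/t)$; for $\nabla$ it produces $!$. With $\sigma_{f^*}$, which is again symmetric, the reduction of the upper bound to the lower bound does work, but it presupposes the lower bound, which you have not proved.
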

\begin{proof} 
$f(t)$ being given by \eqref{eq:1}, by \eqref{eq:12}, we get
$$ A\sigma_f A^* = f(0) (A + A^*) + \int_0^{\infty} 
\frac{1}{2\lambda^2}  (\lambda A) \,!\, A^* \, d \mu (\lambda).$$
Since $A\sigma_f A^* = A^* \sigma_f A$,  in virtue of (ii) of Proposition\,\ref{Prop:5-1},
\begin{gather*}
\int_0^{\infty}\frac{1}{2\lambda^2}  (\lambda A) \,!\, A^* \, d \mu (\lambda) =
\int_0^{\infty}\frac{1}{4\lambda^2}  \{(\lambda A) \,!\, A^* +  (\lambda A^*) \,!\, A \}\, d \mu (\lambda)\\
\geqq \int_0^{\infty}\frac{1}{4\lambda^2}  \{(\lambda (A + A^*)) \,!\, (A + A^*) =
\int_0^{\infty}\frac{1}{2\lambda^2}  \frac{\lambda}{\lambda + 1} (A + A^*) \,d \mu(\lambda).
\end{gather*}
We therefore arrive at 
$$ A\sigma_f A^* \geqq \{2 f(0) + \int_0^{\infty} 
\frac{1}{\lambda  (\lambda +1)}d \mu (\lambda) \}\frac{A + A^*}{2}= f(1) 
A\nabla A^*.$$
 Since $A\nabla A^* \leqq A\,!\, A^*$, to prove  the second inequality 
it is enough to verify 
$$\Re \frac{1}{2\lambda ^2} ((\lambda A) \,!\, A^*) \leqq \frac{A\,!\, A^*}
{\lambda (\lambda +1)}.$$ This is equivalent to 
\begin{align*}
&\frac{1}{2\lambda}\left( (\lambda A + A^*)^{-1} + (A + \lambda A^*)^{-1}\right)
\leqq \frac{2}{\lambda (\lambda + 1)}(A + A^*)^{-1}\\
\Longleftrightarrow & \frac{A + A^*}{2} \leqq (\frac{\lambda}{\lambda + 1} A + 
\frac{1}{\lambda + 1}A^*) \,!\,  (\frac{1}{\lambda + 1} A + 
\frac{\lambda}{\lambda + 1}A^*).
\end{align*}
By taking use of \eqref{eq:9} again, we obtain the last inequality. 
\end{proof}
For an operator monotone function $f(t)>0$ on $(0, \infty)$ with $f(1) = 1$, 
$h(t):= \frac{1}{2}\{ f( t) + t f(1/t)\}$ defines a symmetric mean. Hence, we have 
\begin{align}
&A\nabla A^* \leqq  \frac{1}{2} \{ A \sigma _f  A^* + A^* \sigma _f  A \}
 \leqq A \,!\, A^*,\notag\\
&A\nabla A^* \leqq  \frac{1}{2} \{ A \#_r  A^* + A \#_{1-r} A^* \}
 \leqq A \,!\, A^*.\label{eq:14}
\end{align}
\begin{proposition}\label{Prop:6-2}
Let $A$ be strictly accretive. Then \\
(i). $A^* \# A = |A|$ if and only if $A$ is normal.\\
(ii). $A\#_r A^* = A ^{1-r} A^{*r}$ if $A$ is normal. 
\end{proposition}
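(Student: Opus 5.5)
Both parts reduce to earlier results: Theorem~\ref{Th:5-5} for the ``normal'' directions, and the uniqueness statement in (i) of Proposition~\ref{Prop:5-3} for the converse in (i).

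\emph{Part (ii).} Let $A$ be normal and strictly accretive. Then $A^*$ is normal, commutes with $A$, and is accretive and invertible. By Lemma~\ref{Le:5-1}(i), $A\#_r A^* = A^*\#_{1-r}A$. It is most natural to mimic the proof of Theorem~\ref{Th:5-5} directly for $\#_r$. We have
$$A\#_r A^* = A^{1/2}(A^{-1/2}A^*A^{-1/2})^r A^{1/2},$$
and $A^{-1/2}A^*A^{-1/2} = A^*A^{-1}$ because everything is normal and commuting. The spectrum of $A^*A^{-1}$ is $\{\bar z/z\}$ with $|\arg z|<\pi/2$ on $\mathrm{Sp}(A)$, so it avoids $(-\infty,0]$. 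Using Remark~\ref{Re:2-2}(ii), all functions of the commuting normal operators $A$ and $A^*$ are given by the joint spectral measure $E$ of $A$. This gives
$$(A^*A^{-1})^r=\int (\bar z/z)^r\,dE_z .$$
Since $|\arg z|<\pi/2$, principal branches combine additively here: $(\bar z/z)^r = \bar z^{\,r} z^{-r}$. Hence
$$A^{1/2}(A^*A^{-1})^rA^{1/2} = \int z^{1/2}\,\bar z^{\,r} z^{-r} z^{1/2}\,dE_z = \int z^{1-r}\bar z^{\,r}\,dE_z = A^{1-r}A^{*r}.$$
The only care needed is the branch bookkeeping: $\arg(\bar z/z)=-2\arg z\in(-\pi,\pi)$, so $\arg$ is additive. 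This also avoids invoking Proposition~\ref{Prop:4-1}.

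\emph{Part (i), ``if''.} Taking $r=1/2$ in (ii) with $A$ and $A^*$ interchanged gives $A^*\#A = A^{*1/2}A^{1/2} = \int |z|\,dE_z = |A|$.

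\emph{Part (i), ``only if''.} Suppose $A^*\#A=|A|$. By Proposition~\ref{Prop:5-3}(i), applied with the pair $(A^*,A)$ (valid since both are strictly accretive), $X=A^*\#A$ satisfies $X(A^*)^{-1}X = A$. Substituting $X=|A|$ gives $|A|(A^*)^{-1}|A| = A$, that is, $(A^*)^{-1} = |A|^{-1}A|A|^{-1}$, using that $|A|$ is invertible because $A$ is. Write the polar decomposition $A=U|A|$, with $U$ unitary since $A$ is invertible. Then $A^*=|A|U^*$, so $(A^*)^{-1}=U|A|^{-1}$. On the other hand $|A|^{-1}A|A|^{-1}=|A|^{-1}U$. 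Hence $U|A|^{-1}=|A|^{-1}U$, so $U$ commutes with $|A|$, and therefore $A$ is normal: $AA^*=U|A|^2U^*=|A|^2=A^*A$.

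This is the main step, and it works cleanly: the apparent obstacle, that uniqueness needs $X\in\mathcal S_r$, is irrelevant here. We never need uniqueness, since we only use the identity $X(A^*)^{-1}X=A$, which holds for $X=A^*\#A$ by Proposition~\ref{Prop:5-3}(i).
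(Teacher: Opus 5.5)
Your proof is correct. Part (ii) is exactly the paper's argument: with $A=\int z\,dE_z$ one gets $(A^{-1}A^*)^r=\int(\bar z/z)^r\,dE_z=A^{-r}A^{*r}$. Your check that $\arg(\bar z/z)=-2\arg z\in(-\pi,\pi)$ only makes explicit the branch bookkeeping the paper leaves implicit.

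The differences are in (i).

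\emph{The ``if'' direction.} The paper does not go through (ii). It notes that a normal $A$ satisfies $|A|A^{-1}|A|=A^*$. Since $|A|$ is positive definite, it lies in every $\mathcal{S}_r$, so the uniqueness clause of Proposition~\ref{Prop:5-3} identifies $|A|$ with the geometric mean of $A$ and $A^*$. You instead read it off as the case $r=1/2$ of (ii) applied to $A^*$, i.e.\ $\int\bar z^{1/2}z^{1/2}\,dE_z=\int|z|\,dE_z$. The paper's route is algebraic but relies on the $\mathcal{S}_r$-uniqueness, which itself rests on the numerical-range argument of Proposition~\ref{Prop:4-1}. Yours needs only the spectral calculus and makes (i) a corollary of (ii).

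\emph{The ``only if'' direction.} Both arguments start from $|A|(A^*)^{-1}|A|=A$, equivalently $|A|A^{-1}|A|=A^*$. The paper multiplies these directly: $AA^*=|A|(A^*)^{-1}|A|^2A^{-1}|A|=|A|^2=A^*A$. You pass through the polar decomposition to show that $U$ commutes with $|A|$. This is equivalent, just one step longer.

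Finally, your opening sentence says the normal directions reduce to Theorem~\ref{Th:5-5}, but the argument you actually give never uses it.
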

\begin{proof} (i). 
If $A$ is normal, then $|A| A^{-1} |A|= A^*$. Since $|A|$ is positive definite, 
by Proposition\,\ref{Prop:5-3}, 
we get $A^* \# A = |A|$. Suppose conversely $A^* \# A = |A|$. Then 
$|A| A^{-1} |A| = A^*$. We consequently get
$$A A^*= |A| (A^*)^{-1}|A||A| A^{-1} |A| = A^* A.$$
(ii). $A^{-1}  A^*$ is normal and 
${\it Sp}(A^{-1} A^{*}) \cap (-\infty, 0] = \emptyset$. Putting 
$A=\int_{{\it Sp}(A)} z d E_z$, 
by the similar way as (ii) of Remark\,\ref{Re:2-2} we get 
$$(A^{-1}  A^*)^r = \int_{{\it Sp}(A)} \left(\frac{\overline{z}}{z}\right)^r d E_z = A^{-r} A^{*r},$$
which indicate (ii).
\end{proof}
\begin{corollary}\label{Cor:6-3}
Let $A$ be strictly accretive and normal. Then 
\begin{equation}\label{eq:15}
|A|=  \frac{1}{\pi}\int_0^{\infty}A (\lambda A + A^*)^{-1} A^* 
\lambda^{-1/2} d \lambda 
\end{equation}
 \end{corollary}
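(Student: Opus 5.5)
The formula \eqref{eq:15} will follow from two earlier results: Proposition~\ref{Prop:6-2}(i), which identifies $|A|$ with a geometric mean, and the integral representation \eqref{eq:12}, specialised to $r=1/2$.

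\emph{Step 1: $A^*\# A$ is defined.} Since $A$ is strictly accretive, so is $A^*$, because $\Re A^* = \Re A$. Both $A$ and $A^*$ are therefore invertible. For $\lambda>0$ the operator $\lambda A^* + A$ is strictly accretive, hence invertible. Thus condition \eqref{eq:10} holds for the pair $(A^*, A)$, and $A^*\# A$ is defined by Definition~\ref{Def:5-2}.

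\emph{Step 2: identify $|A|$ and pick the right ordering.} Proposition~\ref{Prop:6-2}(i) gives $A^*\#A = |A|$ for normal $A$. The formula \eqref{eq:12} with $r=1/2$ and $\sin(\pi/2)=1$ reads
$$X\# Y = \frac{1}{\pi}\int_0^\infty \frac{1}{\lambda} X(\lambda X + Y)^{-1} Y \lambda^{1/2}\, d\lambda = \frac{1}{\pi}\int_0^\infty X(\lambda X+Y)^{-1}Y\lambda^{-1/2}\,d\lambda .$$
The target integrand is $A(\lambda A + A^*)^{-1}A^*$, which corresponds to $X=A$, $Y=A^*$. So I need $A\#A^*$ rather than $A^*\#A$. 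Since $t\mapsto t^{1/2}$ is symmetric, Lemma~\ref{Le:5-1}(ii) gives $A\# A^* = A^*\# A = |A|$. The pair $(A,A^*)$ satisfies \eqref{eq:10} by the same reasoning as in Step 1.

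\emph{Step 3: substitute.} Putting $X=A$, $Y=A^*$ into the displayed formula yields \eqref{eq:15}.

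The only delicate point is justifying \eqref{eq:12} itself, namely that $A^{1/2}(A^{-1/2}A^*A^{-1/2})^{1/2}A^{1/2}$ equals the integral. The paper obtains this from Proposition~\ref{Prop:2-2} applied to $T=A^{-1/2}A^*A^{-1/2}$. Its spectrum equals that of $A^*A^{-1}$, which avoids $(-\infty,0]$, so the Riesz--Dunford calculus and the integral formula apply. One then conjugates by $A^{1/2}$, using
$$A^{1/2}(\lambda I + A^{-1/2}A^*A^{-1/2})^{-1}A^{1/2} = A(\lambda A + A^*)^{-1}A^* \cdot(\text{appropriate rearrangement}).$$
Since this was already recorded in \eqref{eq:12}, I take it as given.

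As an alternative check that avoids Lemma~\ref{Le:5-1}, one can use Remark~\ref{Re:2-2}(ii) with the spectral measure of $A$. The integrand then becomes
$$\frac{z\bar z}{\lambda z + \bar z}\,\lambda^{-1/2}\quad\text{with } z = |z|e^{i\theta}.$$
Integrating gives $|z|\cdot \frac{1}{\pi}\int_0^\infty \frac{d\lambda}{(\lambda e^{i\theta}+e^{-i\theta})\sqrt\lambda}$, which is consistent with the residue identity mentioned in the introduction.
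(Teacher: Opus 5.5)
Your proposal is correct and matches the paper's one-line proof: $|A|=A^*\#A=A\#A^*$ by Proposition~\ref{Prop:6-2}(i) together with the symmetry of $\#$ from Lemma~\ref{Le:5-1}(ii), and then \eqref{eq:12} with $r=1/2$ gives \eqref{eq:15}. Your closing spectral-measure ``check'' would count as an independent argument only if the scalar identity were proved separately by residues, because the paper itself derives that identity from this corollary.
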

This evidently follows from \eqref{eq:12} and the previous proposition, and 
induces the simplest case:
\begin{example}
If $-\pi/2 < \theta < \pi/2$, then
$$1 = \frac{1}{\pi} \int_0^{\infty} \frac{1}{(\lambda e^{i\theta}  +  e^{-i\theta})
\sqrt{\lambda} } d \lambda.$$
\end{example}
One can confirm this elementary formula by calculating the residue.   

 We note that the positive definiteness of $\Re{A}$ 
is not assumed in the following.
 \begin{corollary}\label{Cor:6-4}
Let $A$ be a normal and accretive operator.
Then so is $A^{1-r} A^{*r}$ for $0\leqq r \leqq1$, and  
$$A + A^* \leqq   A^{1-r} A^{*r} + A^r A^{*(1-r)} .$$
\end{corollary}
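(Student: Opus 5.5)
Since $A$ is normal, I plan to reduce everything to scalar functional calculus on $\mathit{Sp}(A)$ and obtain the non-strict case as a limit of the strictly accretive case $A+\epsilon I$.

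\textbf{Strictly accretive case.} Suppose first that $\Re A$ is positive definite. By (ii) of Proposition~\ref{Prop:6-2}, $A\#_r A^* = A^{1-r}A^{*r}$. By Lemma~\ref{Lem:5-0}(i) this mean is accretive, and it is normal because it is a function of the normal operator $A$. Using Lemma~\ref{Le:5-1}(i), $A\#_{1-r}A^* = A^r A^{*(1-r)}$ as well. Then the left inequality of \eqref{eq:14} gives
$$\frac{A+A^*}{2}\leqq \frac{1}{2}\{A\#_r A^*+A\#_{1-r}A^*\}=\frac12\{A^{1-r}A^{*r}+A^rA^{*(1-r)}\},$$
which is the claim for $0<r<1$. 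The endpoint cases $r=0,1$ are trivial, since both sides become $A+A^*$.

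\textbf{General case.} For a merely accretive normal $A$, put $A_\epsilon:=A+\epsilon I$. This operator is normal and strictly accretive, so the inequality holds for $A_\epsilon$. I then let $\epsilon\to+0$. By Theorem~\ref{Th:3-1} (or Lemma~\ref{Le:2}), $A_\epsilon^{1-r}\to A^{1-r}$ and $A_\epsilon^{*r}=(A_\epsilon^{r})^*\to A^{*r}$ in norm. Products of norm-convergent bounded sequences converge in norm, so both sides converge. The cone of positive operators is norm closed, so the inequality passes to the limit. Accretivity of $A^{1-r}A^{*r}$ also passes to the limit. Normality is preserved too: for commuting normal $A_\epsilon^{1-r}$ and $A_\epsilon^{*r}$, the product is normal, and a norm limit of normal operators is normal.

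\textbf{Main obstacle.} The delicate point is to make sure, when $0\in\mathit{Sp}(A)$, that $A^{1-r}$ defined by \eqref{eq:6} agrees with the spectral functional calculus $\int z^{1-r}\,dE_z$. Otherwise commutativity and the identification $A^{1-r}A^{*r}=\int z^{1-r}\bar z^{\,r}\,dE_z$ are not automatic. Remark~\ref{Re:2-2}(iii) handles this: $f(A+\epsilon I)\to\int f(z)\,dE_z$ in norm, and combined with Theorem~\ref{Th:3-1} this identifies $A^{1-r}$ with $\int z^{1-r}dE_z$.

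With that identification, there is also a direct route. $A^{1-r}A^{*r}$ has symbol $|z|e^{i(1-2r)\arg z}$, and since $|\arg z|\le\pi/2$ its real part is $|z|\cos((1-2r)\arg z)\geqq 0$. The sum $A^{1-r}A^{*r}+A^rA^{*(1-r)}$ has symbol $2|z|\cos((1-2r)\arg z)$. Because $|(1-2r)\arg z|\leqq|\arg z|\leqq\pi/2$, this is at least $2|z|\cos(\arg z)=z+\bar z$. This scalar inequality yields both accretivity and the operator inequality via the spectral theorem, so I would present it as a check on, or alternative to, the limiting argument.
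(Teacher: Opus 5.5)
Your main argument is exactly the paper's proof: the strictly accretive case comes from the left inequality of \eqref{eq:14} combined with Proposition~\ref{Prop:6-2}(ii), and the general case follows by letting $\epsilon\to+0$ in $A+\epsilon I$ via Theorem~\ref{Th:3-1} (the paper cites Lemma~\ref{Le:2}); your norm-limit treatment of accretivity and normality correctly fills in the paper's ``the rest is clear.'' The identification via Remark~\ref{Re:2-2}(iii) that you call the main obstacle is not actually needed for this limiting argument, since commutativity, normality and accretivity all survive norm limits, but it does make your alternative scalar route valid, and that route bypasses the mean inequalities entirely by reducing to $\cos((1-2r)\phi)\geqq\cos\phi$ for $|\phi|\leqq\pi/2$.
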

\begin{proof} Let $\epsilon>0$ and substitute  $A+\epsilon I $ into the first 
inequality of \eqref{eq:14}. Because of Proposition\,\ref{Prop:6-2},  we get 
$$(A + \epsilon I) + (A + \epsilon I)^* \leqq \{ (A+ \epsilon I)^{1-r}
 (A + \epsilon I)^{*r} + (A + \epsilon I)^r (A+ \epsilon I)^{*(1-r)} \}.$$
By Lemma\,\ref{Le:2} $(A + \epsilon I)^r$ converges to $A^r$ as $\epsilon \to 0$, and so on. 
We consequently obtain the required inequality. The rest is clear.
\end{proof}
 \begin{remark}\rm
In the above corollary, the case $r=1/2$ means a simple inequality 
$\Re(A) \leqq |A|$, and the cases $r=0$ or $r=1$ are trivial; so it interpolates them. 
When we rewrite the left side as $((1-r)A + r A^*) + (r A + (1-r) A^*)$, the inequality looks 
like a reverse of the Young inequality for positive operators; however the both sides are equal 
if $A=A^*$.  
%
\end{remark}

\begin{proposition}\label{Prop:6-5} Let $A$ be an accretive operator with positive definite real part and 
$\sigma _f$ a symmetric mean.
Let $h(t)> 0$ be an operator monotone function, not necessarily symmetric,
 on $(0, \infty)$. Then 
$$h(A\nabla A^*) \leqq h(A)\nabla h(A^*)\leqq h(A) \sigma _f  h(A^*)
 \leqq h(A) \,!\, h(A^*) \leqq h(A \,!\, A^*).$$
\end{proposition}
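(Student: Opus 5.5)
The chain has four inequalities, and I would prove them separately.

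\emph{The two middle inequalities.} These should come straight from Theorem~\ref{Th:6-1} applied to $h(A)$ in place of $A$. For this I need $h(A)$ to be accretive with positive definite real part. By Theorem~\ref{Th:3-1}, $h(A)$ is accretive and $h(A)^*=h(A^*)$. For strict accretivity I would use the representation \eqref{eq:6}. Write $\Re A\geqq cI$ with $c>0$ and $\|A\|\leqq M$. If $b>0$, the term $b\,\Re A$ already suffices. If $b=0$, I would compute
$$\Re\Bigl(\frac1\lambda I-(\lambda I+A)^{-1}\Bigr)=\frac1\lambda\,(\lambda I+A)^{-1*}\,(\Re A)\,(\lambda I+A)^{-1}\ \geqq\ \frac{c}{\lambda(\lambda+M)^2}\,I .$$
So whenever $\mu\neq0$ the integral contributes a positive definite part. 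If $\mu=0$ and $b=0$, then $h$ is the positive constant $h(0)$, and $h(0)I$ is positive definite. With this in hand, Theorem~\ref{Th:6-1} for the symmetric mean $\sigma_f$ gives $h(A)\nabla h(A^*)\leqq h(A)\sigma_f h(A^*)\leqq h(A)\,!\,h(A^*)$.

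\emph{First inequality, $h(A\nabla A^*)\leqq h(A)\nabla h(A^*)$.} I would compare \eqref{eq:6} term by term. The constant and linear terms agree exactly. It then remains to show, for each $\lambda>0$,
$$\Re\,(\lambda I+A)^{-1}\ \leqq\ (\lambda I+\Re A)^{-1}.$$
Put $S=\lambda I+A$, which is strictly accretive. Then $\Re S=\lambda I+\Re A$ and $\Re S^{-1}=\frac12(S^{-1}+S^{*-1})$. The needed inequality becomes $\Re(S^{-1})\leqq(\Re S)^{-1}$. This is the inverse form of \eqref{eq:9} with $S^{-1}$ in place of $A$, namely $\Re(T)\leqq(\Re T^{-1})^{-1}$ for $T=S^{-1}$. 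Integrating against $d\mu$ then gives the first inequality.

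\emph{Last inequality, $h(A)\,!\,h(A^*)\leqq h(A\,!\,A^*)$.} This is the step I expect to be the main obstacle. Since $A\,!\,A^*=(\Re A^{-1})^{-1}$ and $h(A)\,!\,h(A^*)=(\Re\, h(A)^{-1})^{-1}$, the claim is equivalent to
$$\Re\,\bigl(h(A)^{-1}\bigr)\ \geqq\ h\bigl((\Re A^{-1})^{-1}\bigr)^{-1}.$$
My plan is to use $\tilde h(t):=1/h(1/t)$, which is again a positive operator monotone function. It satisfies $h(A)^{-1}=\tilde h(A^{-1})$, since the functional calculus respects composition and inversion. Putting $B=A^{-1}$, which is strictly accretive, the inequality reads
$$\Re\,\tilde h(B)\ \geqq\ \tilde h(\Re B)\qquad\text{after inversion on both sides.}$$
More precisely, $h((\Re B)^{-1})^{-1}=\tilde h(\Re B)$, so I need $\Re\,\tilde h(B)\geqq\tilde h(\Re B)$. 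This is exactly the first inequality already proved, applied to $\tilde h$ and $B$. The things to check carefully are that $\tilde h$ has $\tilde h(0)\geqq0$ so the representation \eqref{eq:1} applies, and that the composition identity $h(A)^{-1}=\tilde h(A^{-1})$ holds in the Riesz--Dunford sense. The latter should follow because $A$ is invertible with spectrum away from $(-\infty,0]$.
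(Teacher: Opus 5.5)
Your argument follows the paper's proof. The first inequality comes from comparing \eqref{eq:6} term by term via $\Re\,(\lambda I+A)^{-1}\leqq(\lambda I+\Re A)^{-1}$, which follows from \eqref{eq:9}. The middle two come from Theorem~\ref{Th:6-1} applied to $h(A)$. The last comes from substituting $A^{-1}$ for $A$ and $1/h(1/t)$ for $h(t)$ in the first.

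The one step that needs fixing is your strict-accretivity computation. The displayed identity is false. For a scalar $A=a>0$ the left side is $\frac{a}{\lambda(\lambda+a)}$, while the right side is $\frac{a}{\lambda(\lambda+a)^2}$. Consequently the bound $\frac{c}{\lambda(\lambda+M)^2}$ fails whenever $\lambda+a<1$.

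With $S=\lambda I+A$, the correct identity is
$$\Re\Bigl(\frac1\lambda I-S^{-1}\Bigr)=S^{*-1}\Bigl(\Re A+\frac1\lambda A^*A\Bigr)S^{-1}\ \geqq\ \frac{c}{(\lambda+M)^2}\,I .$$
This bound integrates to a positive constant whenever $\mu\neq0$, so your conclusion survives.

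The case analysis is also unnecessary. If you prove the first inequality before the middle ones, as the paper does, then
$$\Re\, h(A)=h(A)\nabla h(A^*)\geqq h(\Re A)\geqq h(c)I$$
with $h(c)>0$. So $h(A)$ is strictly accretive immediately, and $h(A)\sigma_f h(A^*)$ and $h(A)\,!\,h(A^*)$ are well defined.

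Your remaining checks are correct, and the paper leaves them implicit. These are $\tilde h(0)\geqq0$ and the Riesz--Dunford identity $h(A)^{-1}=\tilde h(A^{-1})$.
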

 \begin{proof}
By \eqref{eq:9}, for $\lambda > 0$
$$\left(\frac{(\lambda I + A) + (\lambda I + A^*)}{2}\right) ^{-1} \geqq 
\frac{(\lambda I + A)^{-1} + (\lambda I + A^*)^{-1}}{2}.$$
By applying \eqref{eq:6} to $h(t)$, we get the first inequality. This also shows that the real part of 
 $h(A)$ is positive definite too, which implies $h(A) \sigma _f  h(A^*)$ is well-defined.
 To see the last inequality, 
substitute $A^{-1}$ to $A$  and $\frac{1}{h(1/t)}$ to $h(t)$ in the first one.
Two other inequalities come from the previous theorem.
\end{proof}
The first inequality was shown in \cite{BKS1} for matrices in a different way. 


\end{document}